\newtheorem{theorem}{Theorem}[section]
\newtheorem{lemma}{Lemma}[section]
\newtheorem{proposition}{Proposition}[section]
\theoremstyle{definition}
\theoremstyle{remark}
\newtheorem{remark}[theorem]{Remark}
\numberwithin{equation}{section}
\newcommand{\R}{\mathbb{R}}
\newcommand{\C}{\mathbb{C}}
\newcommand{\Z}{\mathbb{Z}}
\newcommand{\N}{\mathbb{N}}
\newcommand{\begeq}{\begin{equation}}
\newcommand{\stopeq}{\end{equation}}
\newcommand{\ep}{\epsilon}
\newcommand{\ds}{\displaystyle}
\newcommand{\Om}{\Omega}
\newcommand{\ta}{\theta}
\begin{document}
\title[Infinitesimal Bendings]
{Infinitesimal Bendings for Classes of  Two Dimensional Surfaces.}

\author{B. de Lessa Victor}
\thanks{\small The first author was financed in part by the Coordenação de Aperfei\c coamento de Pessoal de N\'ivel Superior - Brasil (CAPES) - Finance Code 001 and in part by FAPESP (grant number 2021/03199-9) } 
\address{\small Departamento de Matemática, Instituto de Ciências Matemáticas e de \linebreak  Computação (ICMC), Universidade de São Paulo (USP), São Carlos (SP), Brazil.}
\email{brunodelessa@gmail.com}

\author{Abdelhamid Meziani}
\address{\small Department of Mathematics, Florida International University, Miami, FL, 33199, USA.}
\email{meziani@fiu.edu}

\subjclass[2020]{Primary: 53A05; Secondary: 35F05, 30G20.}

\keywords{Infinitesimal bending, Bers-Vekua equation, Spectral value,  Asymptotic expansion.}

\begin{abstract}
Infinitesimal bendings for classes of two-dimensional surfaces in $\R^3$ are investigated.
The techniques used to construct the bending fields include reduction to solvability of
Bers-Vekua type equations and systems of differential equations with periodic coefficients.   
\end{abstract}
\maketitle

\section{Introduction}
This paper deals with infinitesimal bendings for classes of orientable surfaces.
We consider a smooth surface $S\subset\R^3$ given by a position vector $R$
over a region $\Om\subset\R^2$. Thus
\[
S=\{ R(s,t)\in\R^3;\ (s,t)\in\Om\},
\]
where $R\in C^\infty(\Om,\R^3)$. A one parameter deformation surface $S_\ep$ ($\ep\in\R$) given by the position
vector
\[
R_\ep(s,t)=R(s,t)+2\sum_{j=1}^m\ep^jU_j(s,t)\, ,
\]
with $U_j\in C^k(\Om,\R^3)$ ($\, k\in\Z^+$), is an
infinitesimal bending of $S$ of order $m\in\Z^+$ if the metrics of $S$ and $S_\ep$ coincide to order
$m$ as $\ep\to 0$. That is,
\[
dR_\ep^2(s,t)=dR^2(s,t)+ o(\ep^m)\quad\textrm{as}\quad \ep\to 0.
\]
The study of infinitesimal bendings of surfaces has a long and rich history and has many physical applications
(see \cite{ni}, \cite{po}, and \cite{ro}). For a complete overview we refer to the survey article by Sabitov \cite{s} and the extensive
references within.

The results of this paper are generalizations of those contained in \cite{m1}, \cite{m3} that deal with infinitesimal
bendings of surfaces with nonnegative curvature. For a surface with positive Gaussian curvature except at a finite number
of planar points, we use the (complex) vector field of asymptotic directions and an associated Bers-Vekua type equation
to construct non trivial infinitesimal bendings of any finite order (Theorem \ref{Prisoners}). For surfaces with nonnegative curvature given
as a graph of a homogeneous function: $R(s,t)=(s,t,z(s,t))$ with $z$ a homogeneous function,
we construct infinitesimal bendings of
higher orders through the solvability of associated systems of periodic differential equations, provided
that two numbers attached to the surface 
satisfy a number theoretic condition
(Theorems \ref{Homogeneous1} and \ref{Homogeneous2}).

In the final section, we consider a special class of surfaces defined as a graphs of
function $s^{m+2}\pm t^{n+2}$ (a model for surfaces defined as graphs of functions $f(s)+g(t)$).
We show (Theorem \ref{AnalyticBendings}) that the space of
real analytic infinitesimal bendings on the rectangle $|s|<\rho, |t| <\rho$ with $0<\rho\le\infty$,
is isomorphic to the space $\mathcal{A}(\rho)^4$,
where $\mathcal{A}(\rho)$ is the space of convergent power series of one variable with radius of convergence $\rho$.

\section{Definitions and Equations for Bending Fields}

Let $S$ be a $C^{\infty}$ surface in $\R^{3}$ over a domain $\Omega \subset \R^{2}$ given by
\begin{equation} \label{Schindler's List}
S = \left\{R(s,t) = \left(x(s,t), y(s,t), z(s,t) \right); \ (s, t) \in \Omega  \right\},
\end{equation}
and $S_\varepsilon$  a deformation of $S$, given by
\begin{equation} \label{Sunset Boulevard}
S_\varepsilon = \left\{R_{\varepsilon}(s,t) = R(s,t) + 2 \varepsilon U^{1}(s,t) + \ldots + 2\varepsilon^{m} U^{m}(s,t) \right\},
\end{equation}
where  $U^{j}: \Omega \to \R^3$ is a $C^{k}$ function for  $j \in\left\{1, 2, \ldots, m \right\}$, for some $k \in \N$.
The deformation $S_\varepsilon$ is an \textit{infinitesimal bending of $S$ of order $m$} if its first fundamental form $dR_{\varepsilon}^{2}$ satisfies the following condition:
\begin{equation} \nonumber
d R_{\varepsilon}^{2} = dR^{2} + o(\varepsilon^m), \ \text{as} \ \varepsilon \to 0.
\end{equation}
Since
$$dR_{\varepsilon}^{2} = dR^{2} + 4\varepsilon \left( dR \cdot dU^{1} \right) + \ds \sum_{j = 2}^{m} 4\varepsilon^{j} \left(dR \cdot dU^{j} + \ds \sum_{i =1}^{m-1} dU^{i} \cdot dU^{m-i} \right) + o(\varepsilon^m),$$
then $S_{\varepsilon}$ is an infinitesimal bending of order $m$ if and only if
\begin{equation} \label{The Godfather}
dR \cdot dU^{1} = 0 \ \text{and} \ dR \cdot dU^{j} = - \ds \sum_{i = 1}^{j-1} dU^{i} \cdot dU^{j-i}, \ \ j = 2, 3, \ldots, m.
\end{equation}
For each $j \in \left\{1, 2, \ldots, m \right\}$, set $U^{j}(s,t) = \left(u^{j}(s,t), v^{j}(s,t), w^{j}(s,t)\right)$.
Equation \eqref{The Godfather} can be written as
\begin{equation} \label{Dr. Strangelove}
\begin{cases}
x_{s} u^{j}_{s} + y_{s} v^{j}_{s} + z_{s} w^{j}_{s} =  F^{j}, \\
x_{s}  u^{j}_{t} + y_{s}  v^{j}_{t} + z_{s} w^{j}_{t} + x_{t}  u^{j}_{s} + y_{t} v^{j}_{s} + z_{t} w^{j}_{s} = G^{j},  \\
x_{t}  u^{1}_{t} + y_{t}  v^{1}_{t} + z_{t}  w^{1}_{t} =  H^{j},
\end{cases}
\end{equation}
with $F^{1} = G^{1} = H^{1} = 0$ and, when $j\geq 2$,
\begin{equation} \label{All the President's Men}
\begin{split}
F^{j} &= - \ds \sum_{i = 1}^{j-1} \left(u^{j-i}_{s}  u^{i}_{s} + v^{j-i}_{s}  v^{i}_{s} + w^{j-i}_{s} w^{i}_{s} \right), \\
G^{j} &= - \ds \sum_{i = 1}^{j-1} \left(u^{i}_{s}  u^{j-i}_{t} + v^{i}_{s}  v^{j-i}_{t} + w^{i}_{s}  w^{j-i}_{t} + u^{i}_{t}  u^{j-i}_{s} + v^{i}_{t} v^{j-i}_{s} + w^{i}_{t} w^{j-i}_{s} \right), \\
H^{j} &= - \ds \sum_{i = 1}^{j-1} \left(u^{j-i}_{t}  u^{i}_{t} + v^{j-i}_{t}  v^{i}_{t} + w^{j-i}_{t} w^{i}_{t} \right).
\end{split}
\end{equation}

The trivial bendings of $S$ are those generated through
the rigid motions of the underlying space $\R^3$. In particular, the first order trivial infinitesimal bendings of $S$ are given by $S^{A, B}_{\varepsilon} = \left\{R^{A,B}_{\varepsilon}(s,t) = R(s,t) + \varepsilon \left(A \times R(s,t) + B\right) \right\}$, where $A,B$ are constants in $\R^3$ and $\times$ denotes the vector product in $\R^3$.
A surface $S$ is said to be \textit{rigid} under infinitesimal bendings  if it admits only trivial infinitesimal bendings.

Let $N$ be the normal unit vector to $S$ given by $\ds \frac{R_{s} \times R_{t}}{\left\|R_{s} \times R_{t} \right\|}$ and
$e$, $f$, $g$, the coefficients of the second fundamental form of $S$:
\begin{equation} \label{Cidade de Deus}
e = R_{ss} \cdot N, \ \ \ \ \ \  f = R_{st} \cdot N, \ \ \ \ \  g = R_{tt} \cdot N.
\end{equation}
The Gaussian curvature of $S$ is:
\begin{equation} \label{One Flew Over the Cuckoo's Nest}
K(s,t) = \ds \frac{e g - f^{2}}{\left\|R_{s} \times R_{t} \right\|^{2}} .
\end{equation}
Throughout this work, except for the last section, we will assume that
 the Gaussian curvature of $S$ is nonnegative:
$K(s,t)\geq 0$, for all $(s,t) \in \Omega$.
\section{Surfaces with Nonnegative Curvature and Flat Points} \label{There Will Be Blood}

In this section, we  assume that the parametrization domain $\Omega\subset\R^2$ of $S$  is relatively compact
and that $K>0$ on $\overline{\Omega}$ except at finitely many points $p_1,\,\cdots\, p_n$
at which both principal curvatures vanish. We  assume throughout that $K$ vanishes uniformly only to a finite order
at each flat point $p_j$ (see \eqref{K-hypothesis} below)).
We prove that such a  surface $S$ admits nontrivial infinitesimal bendings
of any order. The idea is to use the complex vector field of asymptotic directions (see \cite{m1}, \cite{m4}) to reduce the
study of the bending equations into solving Bers-Vekua type equations (see also \cite{au} and \cite{u} for the
local deformation of surfaces near flat points).

Let $S$ be given by \eqref{Schindler's List} and
$e$, $f$, $g$,  the coefficients of its second fundamental form. We assume throughout this section the existence of $p_1,\,\,\cdots\, p_n\in \Omega$ such that
\begin{equation}\label{K-hypothesis}
\begin{split}
K(p)>0\quad\forall p\in\overline{\Omega}\setminus\{ p_1,\,\cdots\, , p_n\}\ \   \mathrm{and}\qquad \qquad\\
\mathrm{order}_{p_j}(K) =2\,\mathrm{order}_{p_j}(e)=2\,\mathrm{order}_{p_j}(g)\ \
\forall j\in\{1,\cdots ,n\}\, ,
\end{split}
\end{equation}
where $\mathrm{order}_{p}(F)$ denotes the order of vanishing of the function $F$ at the point $p$.

The field of asymptotic directions is given by
\begin{equation} \label{The Social Network}
 L = g(s,t) \ds \frac{\partial}{\partial s} + \lambda (s,t) \ds \frac{\partial}{\partial t}
\ \ \text{where} \ \ \lambda = -f + i \sqrt{eg - f^{2}} .
\end{equation}

\begin{proposition} \label{Wild Strawberries}
Let $S$ be a surface with nonnegative curvature given by \eqref{Schindler's List} and $L$ be
the vector field of asymptotic directions given by \eqref{The Social Network}.
For a solution  $U^{j}=(u^{j}, v^{j}, w^{j})$  of \eqref{Dr. Strangelove} with  $j \in \N$,  the
$\C$-valued function $h^{j} = LR \cdot U^{j}$ satisfies the equation
\begin{equation} \label{Raging Bull}
CLh^{j} = Ah^{j} - B \overline{h}^{j} + C \left[g^{2}F^{j} + g \lambda G^{j} + \lambda^{2} H^{j} \right],
\end{equation}
where
\begin{equation} \label{Marriage Story}
\begin{gathered}
A=(LR \times \overline{L}R)  (L^{2} R \times \overline{L}R), \ \ \
B=(LR \times \overline{L}R)  (L^{2} R \times LR),  \\
C=(LR \times \overline{L}R) (LR \times \overline{L}R).
\end{gathered}
\end{equation}
\end{proposition}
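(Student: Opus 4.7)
The plan is to apply $L$ to $h^j = LR \cdot U^j$ via Leibniz and then rewrite the result in Bers--Vekua form. Splitting
\[
L h^j \;=\; L(LR) \cdot U^j \;+\; LR \cdot (L U^j),
\]
I would handle the two pieces separately. For the second piece I expand $LR = gR_s + \lambda R_t$ and $LU^j = g U^j_s + \lambda U^j_t$, multiply out, and recognize the scalar products of partials as the inhomogeneous terms in \eqref{Dr. Strangelove}:
\[
LR \cdot LU^j \;=\; g^2 (R_s\cdot U^j_s) + g\lambda\bigl(R_s\cdot U^j_t + R_t\cdot U^j_s\bigr) + \lambda^2 (R_t\cdot U^j_t) \;=\; g^2 F^j + g\lambda G^j + \lambda^2 H^j,
\]
which is precisely the inhomogeneity of \eqref{Raging Bull} up to the factor $C$.

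The heart of the argument, and the step I expect to be the most delicate, is rewriting $L^2 R \cdot U^j$ as a $\C$-linear combination of $h^j$ and $\overline{h}^j$. The key observation is that $L^2 R$ is tangent to $S$. After expanding $L(gR_s + \lambda R_t)$, the only potentially non-tangent contribution is $g^2 R_{ss} + 2g\lambda R_{st} + \lambda^2 R_{tt}$. Dotting with $N$ and using \eqref{Cidade de Deus} produces $g^2 e + 2g\lambda f + \lambda^2 g$; substituting $\lambda = -f + i\sqrt{eg - f^2}$ and factoring yields $g(\lambda^2 + 2f\lambda + eg)$, which vanishes since $\lambda$ is by construction a root of the asymptotic quadratic $X^2 + 2fX + eg = 0$. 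This is exactly why $L$ was chosen as the field of asymptotic directions; without this tangency, $L^2 R \cdot U^j$ would pick up an uncontrollable normal component $(L^2R\cdot N)(N\cdot U^j)$ that is not expressible through $h^j$ and $\overline{h}^j$.

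With tangency in hand, wherever the complexified tangent plane is spanned by $LR$ and $\overline{LR}$ --- that is, away from flat points, where $\operatorname{Im}\lambda\neq 0$ and $g\neq 0$ --- I may write $L^2 R = \alpha\, LR + \beta\, \overline{LR}$. Taking the cross product of both sides with $\overline{LR}$ kills the $\beta$-term, and pairing the resulting normal vector with $LR \times \overline{LR}$ gives
\[
\alpha \;=\; \frac{(LR \times \overline{LR}) \cdot (L^2 R \times \overline{LR})}{(LR \times \overline{LR}) \cdot (LR \times \overline{LR})} \;=\; \frac{A}{C};
\]
symmetrically, crossing with $LR$ yields $\beta = -B/C$. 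Hence $C\, L^2 R = A\, LR - B\, \overline{LR}$. Dotting with the real vector $U^j$ and using $\overline{LR}\cdot U^j = \overline{h}^j$ gives $C\,(L^2 R \cdot U^j) = A h^j - B\, \overline{h}^j$. Multiplying the identity for $Lh^j$ obtained in the first step by $C$ and substituting now produces \eqref{Raging Bull}.
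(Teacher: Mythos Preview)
Your proof is correct and is considerably more direct than the paper's. The paper proceeds computationally: it introduces the auxiliary functions $\varphi^j=R_s\cdot U^j$ and $\psi^j=R_t\cdot U^j$, derives a first-order $2\times 2$ system for $(\varphi^j,\psi^j)$ (equation \eqref{Vertigo}), multiplies by the eigenvector $(g,\lambda)^t$ of the coefficient matrix to collapse the system to a scalar equation for $h^j=g\varphi^j+\lambda\psi^j$, and then identifies the resulting coefficients with $A/C$ and $-B/C$ by citing \cite{m2}. Your argument bypasses all of this by working intrinsically: the Leibniz split, the tangency of $L^2R$ (the geometric meaning of ``asymptotic direction''), and the decomposition of $L^2R$ in the frame $\{LR,\overline{LR}\}$ yield \eqref{Raging Bull} in a few lines, with the coefficients $A/C$ and $-B/C$ appearing transparently from Cramer's rule via cross products.

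The trade-off is that the paper's longer route is not wasted effort: the functions $\varphi^j,\psi^j$ and the explicit $2\times2$ system \eqref{Vertigo} are reused later---in the proof of Theorem~\ref{Prisoners} to reconstruct $U^j$ from $h^j$ via \eqref{The Color Purple} and \eqref{Kramer vs Kramer}, and in Section~4 where the analogue of \eqref{Vertigo} (equation \eqref{Forrest Gump}) is the starting point for the homogeneous case. Your argument proves the proposition more cleanly but does not generate that infrastructure; if you adopt it, you would still need to record the relations $\varphi^j=\mathrm{Re}(h^j/\lambda)$-type formulas separately before invoking them downstream.
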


\begin{proof}
Define functions $\varphi^{j}$ and $\psi^{j}$ by
\begin{align}
\varphi^{j} &= R_{s} \cdot U^{j} = x_{s}u^{j} + y_{s} v^{j} + z_{s} w^{j},  \label{Blade Runner}\\
\psi^{j} &= R_{t} \cdot U^{j} = x_{t}u^{j} + y_{t} v^{j} + z_{t} w^{j}. \label{Blade Runner 2049}
\end{align}
Then
\begin{align}
\varphi_{s}^{j} &= R_{ss} \cdot U^{j} + F^{j} = x_{ss}u^{j} + y_{ss} v^{j} + z_{ss} w^{j} + F^{j}, \label{Star Wars}  \\
\psi_{t}^{j} &= R_{tt} \cdot U^{j} + H^{j} = x_{tt}u^{j} + y_{tt} v^{j} + z_{tt} w^{j} + H^{j}, \label{The Empire Strikes Back} \\
\varphi_{t}^{j} + \psi_{s}^{j}&= 2 R_{st} \cdot U^{j} + G^{j} = 2 \left[x_{st}u^{j} + y_{st} v^{j} + z_{st} w^{j} \right] + G^{j}. \label{Return of Jedi}
\end{align}
Let $R_{s} \times R_{t} = (\alpha_{1}, \alpha_{2}, \alpha_{3})  $ and $\alpha = \left\|R_{s} \times R_{t} \right\|$. It follows
 from \eqref{Blade Runner} and \eqref{Blade Runner 2049} that
\begin{align*}
\alpha_{3} u^{j} &= \alpha_{1}  w^{j} + \varphi^{j}  y_{t} - \psi^{j} y_{s},   \\
\alpha_{3} v^{j} &= \alpha_{2} w^{j} - \varphi^{j} x_{t} + \psi^{j} x_{s}.
\end{align*}

By using these expressions, we can rewrite  \eqref{Star Wars}, \eqref{The Empire Strikes Back} and \eqref{Return of Jedi} as
\begin{align*}
\alpha_{3} \varphi_{s}^{j} &= \begin{vmatrix}
x_{ss} & y_{ss} \\
x_{t}  & y_{t}
\end{vmatrix} \varphi^{j} -
\begin{vmatrix}
x_{ss} & y_{ss} \\
x_{s}  & y_{s}
\end{vmatrix} \psi^{j} + \alpha e w^{j} + \alpha_{3} F^{j}  \\
\alpha_{3} \psi_{t}^{j} &=
\begin{vmatrix}
x_{tt} & y_{tt} \\
x_{t}  & y_{t}
\end{vmatrix} \varphi^{j} -
\begin{vmatrix}
x_{tt} & y_{tt} \\
x_{s}  & y_{s}
\end{vmatrix} \psi^{j} + \alpha g w^{j} + \alpha_{3} H^{j} \\
\alpha_{3} (\varphi_{t}^{j} + \psi_{s}^{j})&=
\begin{vmatrix}
x_{st} & y_{st} \\
x_{t}  & y_{t}
\end{vmatrix} 2\varphi^{j} -
\begin{vmatrix}
x_{st} & y_{st} \\
x_{s}  & y_{s}
\end{vmatrix} 2 \psi^{j} + 2\alpha f w^{j} + \alpha_{3} G^{j}.
\end{align*}
We can eliminate the function $w^{j}$ in  the system above
(using Lemma $3.1$ of \cite{m2} and canceling $\alpha_3$) and reduce it to the following
system for $\varphi^j$ and $\psi^j$:
\begin{equation} \nonumber
g \varphi_{s}^{j} - e \psi_{t}^{j} = - \ds \frac{\left[ R_{t} \cdot (R_{ss} \times R_{tt}) \right]}{\alpha}  \varphi^{j} +\ds \frac{\left[ R_{s} \cdot (R_{ss} \times R_{tt}) \right]}{\alpha}  \psi^{j} + \left(gF^{j} - eH^{j} \right),
\end{equation}
\begin{align}
\nonumber
f (\varphi_{s}^{j} + \psi_{t}^{j}) - \ds \frac{e + g}{2} (\varphi_{t}^{j} + \psi_{s}^{j}) &= \ds \frac{\left[R_{t} \cdot (R_{st} \times (R_{ss} + R_{tt}) \right]}{\alpha}  \varphi^{j}  + f (F^{j}+ H^{j}) - \\
&- \ds \frac{\left[ R_{s} \cdot (R_{st} \times (R_{ss} + R_{tt}) \right]}{\alpha}  \psi^{j}  - \ds \frac{e + g}{2} G^{j}. \nonumber
\end{align}
We rewrite the system in a matrix form as
\begin{equation*}
\begin{split}
\begin{pmatrix} g & 0\\ f & -\frac{e+g}{2}\end{pmatrix}
\begin{pmatrix}\varphi^j\\ \psi^j\end{pmatrix}_s -
\begin{pmatrix} 0 & e\\ \frac{e+g}{2} & -f\end{pmatrix}
\begin{pmatrix}\varphi^j\\ \psi^j\end{pmatrix}_t
&= \underbrace{\begin{pmatrix}
	\xi_{11} & \xi_{12} \\
	\xi_{21} & \xi_{22}
	\end{pmatrix}}_{\Xi}
\begin{pmatrix}
\varphi^{j} \\
\psi^{j}
\end{pmatrix}+ \\
&+\begin{pmatrix}
gF^{j} - eH^{j} \\
f (F^{j}+ H^{j}) - \ds \frac{e + g}{2} G^{j}
\end{pmatrix}.
\end{split}
\end{equation*}
This system can be reduced further after multiplication  by $\ds \frac{2}{(e+g)}\begin{pmatrix}
-\ds \frac{e+g}{2} & 0 \\
-f &g                               \end{pmatrix}$ into 
\begin{equation} \label{Vertigo}
- g \begin{pmatrix}
\varphi^{j} \\
\psi^{j}
\end{pmatrix}_{s} -
\begin{pmatrix}
0 & -e \\
g & -2f
\end{pmatrix}  \begin{pmatrix}
\varphi^{j} \\
\psi^{j}
\end{pmatrix}_{t} = \Lambda  \begin{pmatrix}
\varphi^{j} \\
\psi^{j}
\end{pmatrix} +
 \begin{pmatrix}
e H^{j} - g F^{j}  \\
2f H^{j} -  g G^{j}
\end{pmatrix},
\end{equation}
where $\Lambda = \ds \frac{2}{(e+g)} \begin{pmatrix}
-\ds \frac{e+g}{2} & 0 \\
-f &g                               \end{pmatrix}  \Xi .$

Note that $\lambda$ is an eigenvalue for  the transpose of
$\begin{pmatrix}
0 & -e \\
g & -2f
\end{pmatrix}$ with eigenvector $\eta =\begin{pmatrix}
g \\
\lambda
\end{pmatrix} $.
After multiplying \eqref{Vertigo} by $\eta^{t}$ and using $\lambda^{2} + 2f \lambda  + eg = 0$, we get 
\begin{equation} \label{Boyhood}
 g  (g \varphi^{j}_{s} + \lambda \psi^{j}_{s}) + \lambda  (g \varphi^{j}_{t} + \lambda \psi^{j}_{t}) = - \eta^{t}  \Lambda  \begin{pmatrix}
\varphi^{j} \\
\psi^{j}
\end{pmatrix} + g^{2}F^{j}  + g \lambda G^{j} + \lambda^{2}H^{j}.
\end{equation}
Observe that $ \eta^{t}  \begin{pmatrix}
\varphi^{j} \\
\psi^{j}
\end{pmatrix} = g \varphi^{j} + \lambda \psi^{j} =  h^{j}$, so that $h^{j}_{s} = \eta^t  {\begin{pmatrix}
\varphi^{j} \\
\psi^{j}
\end{pmatrix}}_{s} +  \eta^t_s  \begin{pmatrix}
\varphi^{j} \\
\psi^{j}
\end{pmatrix}$, $ h^{j}_{t} = \eta^t  {\begin{pmatrix}
\varphi^{j} \\
\psi^{j}
\end{pmatrix}}_{t} + \eta^t_t  \begin{pmatrix}
\varphi^{j} \\
\psi^{j}
\end{pmatrix}$
and \eqref{Boyhood} becomes
\begin{equation} \label{American Beauty}
g  h_{s}^{j} + \lambda  h_{t}^{j} = g \eta^t_s \begin{pmatrix}
\varphi^{j} \\
\psi^{j}
\end{pmatrix} +
\lambda \eta^t_t  \begin{pmatrix}
\varphi^{j} \\
\psi^{j}
\end{pmatrix} - \eta^{t}  \Lambda  \begin{pmatrix}
\varphi^{j} \\
\psi^{j}
\end{pmatrix} + g^{2}F^{j}  + g \lambda G^{j} + \lambda^{2}H^{j}.
\end{equation}
Since $g$ is a real function, 
\begin{equation} \label{The Color Purple}
(\lambda - \overline{\lambda})g  \varphi^{j} = \lambda \overline{h}^{j} - \overline{\lambda} h^{j}, \ \ \ \ (\lambda - \overline{\lambda}) \psi^{j} = h^{j} - \overline{h}^{j}.
\end{equation}
Hence after multiplying \eqref{American Beauty} by $g(\lambda - \overline{\lambda})$, we get
\begin{equation} \label{Roma}
g(\lambda - \overline{\lambda}) Lh = Ph +  Q \overline{h} +
g(\lambda - \overline{\lambda}) \left(g^{2}F^{j}  + g \lambda G^{j} + \lambda^{2}H^{j} \right),
\end{equation}
where the coefficients $P$ and $Q$ are given by.
$$P = g(\lambda - \overline{\lambda}) \ds \frac{(L^{2}R \times \overline{L}R) \cdot (LR \times \overline{L}R)}{(LR \times \overline{L}R) \cdot (LR \times \overline{L}R)} = g(\lambda - \overline{\lambda}) \ds \frac{A}{C}, $$
$$Q= -g(\lambda - \overline{\lambda}) \ds \frac{(L^{2}R \times LR) \cdot (LR \times \overline{L}R)}{(LR \times \overline{L}R) \cdot (LR \times \overline{L}R)} = g(\lambda - \overline{\lambda}) \ds \frac{B}{C} $$
(see \cite{m2} for details).
This completes the proof of the proposition.
\end{proof}
\begin{remark}
A direct calculation gives
\begin{equation} \label {1917}
\begin{split}
L^{2}R \times LR &= (\lambda \cdot Lg - g \cdot L\lambda) (R_{s} \times R_{t})  + g^{3} (R_{ss} \times R_{s}) + \lambda^{3} (R_{tt} \times R_{t}) + \\
&+ g^{2} \lambda \left[(R_{ss} \times R_{t}) + 2 (R_{st} \times R_{s})\right] +  g \lambda^{2}\left[2 (R_{st} \times R_{t}) + (R_{tt} \times R_{s}) \right],  \\
L^{2}R \times \overline{L}R &= (\overline{\lambda} \cdot Lg - g \cdot L\lambda) (R_{s} \times R_{t}) + g^{3} (R_{ss} \times R_{s}) + \lambda |\lambda|^{2}  (R_{tt} \times R_{t}) + \\
&+ g^{2} \overline{\lambda} (R_{ss} \times R_{t}) + 2g^{2} \lambda (R_{st} \times R_{s}) + 2g|\lambda|^{2} (R_{st} \times R_{t}) + g \lambda^{2} (R_{tt} \times R_{s}),
\\
LR \times \overline{L}R&= -g (\lambda - \overline{\lambda}) (R_{s} \times R_{t}),
\end{split}
\end{equation}
which implies that $C = - 4g^{2} (eg - f^{2}) \left\|R_{s} \times R_{t} \right\|^{2}.$
\end{remark}
To continue, we need to understand the behavior of the coefficients $A$, $B$ and $C$ at the
flat points $p_{1}, \ldots, p_{n}$.
Since the order of contact of the surface $S$ with the tangent plane at the flat points
$p_j$ is $m_j\ge 3$,
by proceeding as in \cite{m1} we can find local polar coordinates $(r, \theta)$ centered in $p_{j}$ such that
\begin{equation}\label{Back to the Future}
\begin{gathered}
e = r^{m_{j}-2} e_{1}^{j}({\theta}) + r^{m_{j}-1} e_{2}^{j} (r, \theta),  \ \ \  f = r^{m_{j}-2} f_{1}^{j}({\theta}) + r^{m_{j}-1} f_{2}^{j} (r, \theta) , \\
g = r^{m_{j}-2} g_{1}^{j}({\theta}) + r^{m_{j}-1} g_{2}^{j} (r, \theta).
\end{gathered}
\end{equation}
The hypothesis \eqref{K-hypothesis} implies that
$e_{1}^{j}(\ta)g_{1}^{j}(\ta)-f_{1}^{j}(\ta)^2 >0$ for all $\ta\in\R$. This fact, associated to \eqref{The Social Network}, implies that
\begin{equation} \label{Back to the Future II}
\lambda = r^{m_{j}-2} \lambda_{1}^{j}({\theta}) + r^{m_{j}-1} \lambda_{2}^{j} (r, \theta).
\end{equation}
Furthermore the vector field $L$ can
be normalized and written as
\begin{equation} \label{One Million Dollar Baby}
L = \Xi (r, \theta)   r^{m_{j}-3} \left(\mu_{j}\ds \frac{\partial}{\partial \theta} - ir \ds \frac{\partial}{\partial r} \right),
\end{equation}
 where $\mu_{j} > 0$ is an invariant attached to $L$ and $\Xi \neq 0$ everywhere.

In these coordinates we have
\begin{equation} \label{Back to the Future III}
\begin{split}
Lg &= \Xi (r, \theta) r^{2m_{j} - 5} \zeta_{1}^{j} (\theta) + r^{2m_{j} - 4} \zeta_{2}^{j}(r, \theta); \\
L\lambda &= \Xi (r, \theta)   r^{2m_{j} - 5} \vartheta_{1}^{j} (\theta) + r^{2m_{j} - 4} \vartheta_{2}^{j}(r, \theta).
\end{split}
\end{equation} 
Using \eqref{1917}, \eqref{Back to the Future}, \eqref{Back to the Future II} and \eqref{Back to the Future III},  we deduce that 
\begin{align*}
L^{2}R \times LR &= \Xi (r, \theta) r^{3m_{j} - 7} \varsigma_{1}^{j}(\theta) (R_{s} \times R_{t})  +  r^{3m_{j} - 6} \varsigma_{2}^{j}(r, \theta); \\
L^{2}R \times \overline{L}R &= \Xi (r, \theta) r^{3m_{j} - 7} \varkappa_{1}^{j}(\theta) (R_{s} \times R_{t})  +  r^{3m_{j} - 6} \varkappa_{2}^{j}(r, \theta); \\
LR \times \overline{L}R&=  r^{2m_{j} - 4} \kappa_{1}^{j}(\theta) (R_{s} \times R_{t})  + r^{2m_{j} - 3} \kappa_{2}^{j}(r, \theta).
\end{align*}
Hence $A$, $B$, and $C$ can be written as:
\small{
\begin{equation}\label{ABC}\begin{array}{ll}
A &= (L^{2}R \times \overline{L}R)  (LR \times \overline{L}R) = \Xi (r, \theta) r^{5m_{j} - 11} \varrho_{1}^{j}(\theta) \left\|R_{s} \times R_{t} \right\|^{2} +   r^{5m_{j} - 10} \varrho_{2}^{j}(r, \theta);  \\
B &= (L^{2}R \times LR)  (LR \times \overline{L}R) =  \Xi (r, \theta) r^{5m_{j} - 11} \nu_{1}^{j}(\theta) \left\|R_{s} \times R_{t} \right\|^{2} +   r^{5m_{j} - 10} \nu_{2}^{j}(r, \theta); \\
C &=(LR \times \overline{L}R)  (LR \times \overline{L}R) = r^{4m_{j} - 8} \mu_{1}^{j}(\theta) \left\|R_{s} \times R_{t} \right\|^{2} +   r^{4m_{j} - 9} \mu_{2}^{j}(r, \theta).
\end{array}
\end{equation}
}
\normalsize

Since $\left\|R_{s} \times R_{t} \right\|^{2}$ is always strictly positive, we infer from \eqref{ABC} that
\begin{equation} \label{Eternal Sunshine of the Spotless Mind}
\begin{split}
\ds \frac{A}{C}(r, \theta) &= \Xi (r, \theta) r^{m_{j} - 3} a_{1}^{j}(\theta) + r^{m_{j} - 2} a_{2}^{j}(r, \theta); \\ 
\ds \frac{B }{C} (r, \theta) &= \Xi (r, \theta) r^{m_{j} - 3} b_{1}^{j}(\theta) + r^{m_{j} - 2} b_{2}^{j}(r, \theta).
\end{split}
\end{equation}
The following result about the first integral of $L$ (proved in \cite{m1}) will be used.

\begin{lemma}\cite{m1} \label{The Seventh Seal}
There exists an injective function $Z: \overline{\Omega} \to \C$ satisfying the following conditions:
\begin{enumerate} 
\item $Z$ is $C^{\infty}$ on $\overline{\Omega} \setminus \left\{p_{1}, p_{2}, \ldots, p_{n} \right\}$.
\item $LZ = 0$ on $\overline{\Omega}$.
\item For every $j=1, 2,, \ldots, n$, there exists $\mu_{j} > 0$ and polar coordinates $(r, \theta)$ centered at $p_j$ such that
 \begin{equation} \label{Her}
 Z(r, \theta) = Z(0,0) + r^{\mu_{j}} \cdot e^{i \theta} + O(r^{2 \mu_{j}})
 \end{equation}
in a neighborhood of $p_{j}$.
\end{enumerate}
\end{lemma}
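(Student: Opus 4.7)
The plan is to construct $Z$ by combining local first integrals of $L$ on a cover of $\overline{\Omega}$ into a single global function and then arguing injectivity. The vector field $L$ is elliptic off the flat points: since $\lambda - \overline{\lambda} = 2i\sqrt{eg - f^{2}}$ is nonzero on $\overline{\Omega} \setminus \{p_{1},\ldots,p_{n}\}$ when $K > 0$ there, the Nijenhuis--Woolf theorem (or, equivalently, local solvability of the associated Beltrami equation) yields a smooth local first integral of $L$ with nonvanishing differential in a neighborhood of every non-flat point. This step is routine and produces a cover $\{(U_{\alpha}, Z_{\alpha})\}$ of $\overline{\Omega}\setminus\{p_{1},\ldots,p_{n}\}$ by charts on which $LZ_{\alpha} = 0$ and $dZ_{\alpha}\neq 0$.

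Near each flat point $p_{j}$, I would exploit the normalization \eqref{One Million Dollar Baby}, which writes $L = \Xi(r,\theta)\, r^{m_{j} - 3}(\mu_{j}\partial_{\theta} - ir\partial_{r})$ in local polar coordinates. The model vector field $L_{0} := \mu_{j}\partial_{\theta} - ir\partial_{r}$ admits the explicit first integral $Z_{0}(r,\theta) := r^{\mu_{j}} e^{i\theta}$, since
\[
L_{0} Z_{0} = i\mu_{j} r^{\mu_{j}} e^{i\theta} - ir \cdot \mu_{j} r^{\mu_{j}-1} e^{i\theta} = 0.
\]
I would then look for a true first integral of $L$ of the form $Z = Z(p_{j}) + Z_{0} + W$ with $W = O(r^{2\mu_{j}})$. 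Substituting into $LZ = 0$ and dividing by the nonvanishing factor $\Xi \, r^{m_{j}-3}$ reduces the problem to an inhomogeneous equation $L_{0} W = \Phi$, where $\Phi$ collects the error terms between $L$ and $\Xi r^{m_{j}-3} L_{0}$ applied to $Z_{0}$ and vanishes to strictly higher order than $Z_{0}$ at $r=0$. I would solve this by a Banach fixed-point argument in a weighted H\"older space adapted to the vanishing rate at $r=0$, producing the local expansion \eqref{Her}.

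To globalize, observe that the transition maps between any two overlapping local first integrals of $L$ are biholomorphisms of open subsets of $\C$, so the local charts endow $\overline{\Omega}$ with a complex structure for which $L$ becomes a Cauchy--Riemann operator. A single global first integral then corresponds to a global holomorphic coordinate, whose existence and injectivity follow from uniformization applied to $\overline{\Omega}$ with this structure, possibly composed with a Riemann map of the image. The main obstacle is the flat-point construction: the fixed point argument must be carried out in a weighted space capturing the singular behavior of $L$, and one must verify that the perturbation $W$ preserves local injectivity on a punctured neighborhood of $p_{j}$ despite the degeneration of $dZ$ there. Injectivity at the flat points is secured by the dominance of the leading term $r^{\mu_{j}} e^{i\theta}$, which is itself injective on small disks centered at the origin for any $\mu_{j} > 0$.
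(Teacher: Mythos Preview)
The paper does not supply a proof of this lemma: it is quoted verbatim from \cite{m1}, as the sentence preceding it (``The following result about the first integral of $L$ (proved in \cite{m1}) will be used'') makes explicit. So there is no in-paper argument to compare against.

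That said, your outline is a reasonable sketch of how such a result is obtained, with two caveats worth flagging. First, you invoke the normal form \eqref{One Million Dollar Baby} as input to your local construction near $p_{j}$, but notice that once \eqref{One Million Dollar Baby} is granted \emph{exactly} (with $\Xi$ nowhere zero), the function $r^{\mu_j}e^{i\theta}$ is already an exact local first integral of $L$ in those coordinates, and no Banach fixed-point perturbation is needed at all; the $O(r^{2\mu_j})$ remainder in \eqref{Her} then comes solely from composing this local first integral with the holomorphic transition to the global coordinate. Conversely, in \cite{m1} the normal form \eqref{One Million Dollar Baby} is itself derived together with the first integral, so treating it as an independent input risks circularity; a self-contained proof should establish the local first integral directly from the expansions \eqref{Back to the Future}--\eqref{Back to the Future II} rather than from \eqref{One Million Dollar Baby}.

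Second, your globalization step via uniformization requires $\Omega$ (with the $L$-induced complex structure, extended across the $p_j$ by the local charts $r^{\mu_j}e^{i\theta}$) to be simply connected in order to produce a global injective holomorphic coordinate. The paper does not state this hypothesis explicitly, but it is implicit in the assertion that $Z$ is injective on $\overline{\Omega}$; you should make that assumption explicit if you write out the argument. With these two points addressed, the strategy you describe is the standard one and matches the approach of \cite{m1}.
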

We use the first integral $Z$ of $L$ given by \eqref{Her} to transform the equation \eqref{Raging Bull} to a Bers-Vekua type equation. The following notation will be used:
for each  $\ell \in \left\{1, 2, \ldots, n \right\}$, let $\zeta_{\ell} = Z(p_{\ell})$ and
$D(\zeta)=\prod_{\ell = 1}^{n}(\zeta - \zeta_{\ell})$.
The pushforward via $Z$ of a function $f$ defined in $\Omega$ will be denoted $\widetilde{f}$:
$\, \widetilde{f}=f\circ Z^{-1}$.

\begin{proposition} \label{On the Waterfront}
Let $Z$ as in Lemma \ref{The Seventh Seal}.
A function $h^{j}$ is a solution of $\eqref{Raging Bull}$ in $\Omega$ if and only if
its $Z$-pushforward $\widetilde{h}^{j}$ satisfies the following equation in $Z(\Omega)$:
\begin{equation}\label{2001}
 \ds \frac{\partial \widetilde{h}^{j}}{\partial \overline{\zeta}}=
 \ds \frac{P(\zeta)}{D(\zeta)} \widetilde{h}^{j} + \ds \frac{Q(\zeta)}{D(\zeta)} \overline{\widetilde{h}}^{j} + \ds \frac{1}{D(\zeta)}
  \left( S_{1}(\zeta) \widetilde{F}^{j} +  S_{2}(\zeta) \widetilde{G}^{j} +  S_{3}(\zeta) \widetilde{H}^{j}  \right),
\end{equation}
with the following conditions being satisfied: 
\begin{enumerate}[label=\arabic*), leftmargin=*]
\item $P,Q\in L^{\infty}(Z(\Omega))\, \cap\, C^{\infty}\left(Z(\Omega) \setminus
\left\{\zeta_{1}, \zeta_{2}\, \ldots, \zeta_{n} \right\} \right)$; morevoer, we are able to write in a small neighborhood of each $\zeta_{j}$ the coefficients as
\begin{equation*}
P(\zeta) = \chi_{1,j}(\sigma)+\rho^{\epsilon_j}\kappa_{j}^{1}(\rho, \sigma), \  Q(\zeta) = \chi_{2, j}(\sigma) +\rho^{\epsilon_j}\kappa_{j}^{2}(\rho, \sigma),
\end{equation*}
where $\zeta = \rho e^{i\sigma}$, both $\chi_{1, j}$ and $\chi_{2, j}$ are $2\pi$-periodic and $\epsilon_{j} > 0$, for every $j$.

\item $S_{1}, S_{2}, S_{3}$ are bounded in $Z(\Omega)$ and
vanish  to order $\ds \frac{m_{\ell}-1}{\mu_{\ell}}$ at $\zeta_{\ell}$, for every $\ell$, where $m_\ell$ and $\mu_\ell$ are
the positive numbers associated with the point $p_\ell$.
\end{enumerate}  
\end{proposition}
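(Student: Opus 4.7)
The plan is to do a chain-rule change of variables and then a local asymptotic analysis near each flat image $\zeta_\ell$. Since $LZ=0$ and $L,\overline L$ are linearly independent off the flat points (because $\mathrm{Im}(\lambda)=\sqrt{eg-f^2}>0$ there), the chain rule yields $Lh^j=(L\overline Z)\,\widetilde{h^j}_{\overline\zeta}\circ Z$ for any $C^1$ function $h^j$. Substituting this into \eqref{Raging Bull}, dividing by $C\,L\overline Z$ (which is nonvanishing on $\overline\Omega\setminus\{p_1,\dots,p_n\}$), and multiplying each coefficient by $D(\zeta)/D(\zeta)$ converts \eqref{Raging Bull} into \eqref{2001} with
\[
P=\widetilde{\left(\tfrac{AD}{CL\overline Z}\right)},\quad Q=-\widetilde{\left(\tfrac{BD}{CL\overline Z}\right)},\quad S_k=\widetilde{\left(\tfrac{X_kD}{L\overline Z}\right)},\ \ X_1=g^2,\ X_2=g\lambda,\ X_3=\lambda^2.
\]
Because $Z$ is a local $C^\infty$ diffeomorphism on $\overline\Omega\setminus\{p_1,\dots,p_n\}$ and every denominator is nonzero there, all five coefficients are smooth on $Z(\Omega)\setminus\{\zeta_1,\dots,\zeta_n\}$. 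The reverse implication (a solution of \eqref{2001} lifts to a solution of \eqref{Raging Bull}) follows from the same chain-rule identity.

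The substance of the statement is the behavior of these coefficients in a neighborhood of each $\zeta_\ell$. In the polar coordinates $(r,\theta)$ of Lemma \ref{The Seventh Seal}, applying \eqref{One Million Dollar Baby} to \eqref{Her} gives
\[
L\overline Z=-2i\mu_\ell\,\Xi(r,\theta)\,r^{m_\ell-3+\mu_\ell}\,e^{-i\theta}\bigl(1+O(r^{\mu_\ell})\bigr).
\]
Combining this with the expansions \eqref{Eternal Sunshine of the Spotless Mind} for $A/C$ and $B/C$ and writing $D(\zeta)=D_\ell(\zeta_\ell)\,r^{\mu_\ell}e^{i\theta}(1+O(r^{\mu_\ell}))$, where $D_\ell(\zeta):=\prod_{k\neq\ell}(\zeta-\zeta_k)$ is smooth and nonvanishing at $\zeta_\ell$, the singular factor $r^{-\mu_\ell}$ coming from $L\overline Z$ in the denominator is cancelled exactly by the $r^{\mu_\ell}$ in $D(\zeta)$. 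Passing to the polar coordinates $(\rho,\sigma)$ centered at $\zeta_\ell$ via $\rho=r^{\mu_\ell}(1+O(r^{\mu_\ell}))$ and $\sigma=\theta+O(r^{\mu_\ell})$ then yields
\[
P(\zeta)=\chi_{1,\ell}(\sigma)+\rho^{\epsilon_\ell}\kappa^1_\ell(\rho,\sigma),\qquad Q(\zeta)=\chi_{2,\ell}(\sigma)+\rho^{\epsilon_\ell}\kappa^2_\ell(\rho,\sigma),
\]
where the functions $\chi_{1,\ell}$ and $\chi_{2,\ell}$ are, up to the nonzero constant $D_\ell(\zeta_\ell)/(2i\mu_\ell)$, given by $a_1^\ell(\sigma)e^{2i\sigma}$ and $-b_1^\ell(\sigma)e^{2i\sigma}$ respectively, and $\epsilon_\ell:=\min(1,\mu_\ell)/\mu_\ell>0$ is read off from the remainders. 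Their $2\pi$-periodicity in $\sigma$ is inherited from the periodicity of $a_1^\ell,b_1^\ell$ in $\theta$.

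For the $S_k$, the expansions \eqref{Back to the Future} and \eqref{Back to the Future II} give $X_k=O(r^{2m_\ell-4})$ near $p_\ell$, hence
\[
S_k=O\!\left(\tfrac{r^{\mu_\ell}\cdot r^{2m_\ell-4}}{r^{m_\ell-3+\mu_\ell}}\right)=O(r^{m_\ell-1})=O(\rho^{(m_\ell-1)/\mu_\ell}),
\]
which supplies both the claimed order of vanishing at $\zeta_\ell$ and boundedness on all of $Z(\Omega)$. The main obstacle is bookkeeping the leading orders across \eqref{Back to the Future}--\eqref{ABC} and \eqref{One Million Dollar Baby}; once the asymptotic formula for $L\overline Z$ is in hand, the identification of $\chi_{i,\ell}$ and the remainder estimates are routine algebraic manipulations.
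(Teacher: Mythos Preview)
Your proposal is correct and follows essentially the same route as the paper: push forward via $LZ=0$ to get $L=(L\overline Z)\partial_{\overline\zeta}$, divide \eqref{Raging Bull} by $C\,L\overline Z$, and then read off the local asymptotics of each coefficient near $\zeta_\ell$ from \eqref{Back to the Future}--\eqref{Eternal Sunshine of the Spotless Mind} together with the leading-order formula for $L\overline Z$ obtained from \eqref{One Million Dollar Baby} and \eqref{Her}. The paper organizes the final step slightly differently---first isolating the factor $(\zeta-\zeta_\ell)^{-1}$ and then multiplying numerator and denominator by $D_\ell(\zeta)=\prod_{k\ne\ell}(\zeta-\zeta_k)$---and is a bit more explicit about inverting $\rho=r^{\mu_\ell}(1+O(r^{\mu_\ell}))$ via the implicit function theorem, but these are presentational differences only.
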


\begin{proof} The $Z$-pushforward of \eqref{Raging Bull} gives
\begin{equation}\label{Corona}
\widetilde{\left(L \overline{Z}\right)}\, \frac{\partial \widetilde{h}^{j}}{\partial \overline{\zeta}} =
\frac{\widetilde{A}}{\widetilde{C}}\,\widetilde{h}^{j}-\frac{\widetilde{B}}{\widetilde{C}}\,\overline{\widetilde{h}^{j}}
+\widetilde{M},
\end{equation}
where 
\begin{equation} \label{The Imitation Game} 
M=g^2F^j+g\lambda G^j+\lambda^2H^j.
\end{equation}
Since the vector field $L$ is elliptic outside the points $\zeta_j$ and $C$ does not
vanish outside these points, \eqref{Corona} has the form \eqref{2001} in $Z(\Omega)\setminus\{\zeta_1,\cdots ,\zeta_n\}$. Let us verify the proposition near each point $\zeta_j$. It follows from \eqref{One Million Dollar Baby} and \eqref{Her} that
\begin{equation} \label{The Aviator}
\overline{L}Z = -2i \Xi(r, \theta) \left[  r^{m_{j} -3 + \mu_{j}} e^{-i \theta} + r^{m_{j}-3} O(r^{2 \mu_{j}}) \right]
\end{equation}

The next step is to understand how composition with $Z^{-1}$ acts over each term. By assuming that $Z(0, 0) = 0$ and setting $\rho = |Z|$, it follows from \eqref{Her} that we are able to write $\rho(r, \theta) = r^{\mu_{j}} \left(1+ r^{\mu_{j}} J_{1}^{j}(r, \theta) \right)^{1/2},$
where $J_{1}^{j}(r, \theta)$ is a continuous and bounded function. By the binomial theorem, if $r$  is sufficiently small we have
\begin{equation*}
\rho(r, \theta) = r^{\mu_{j}} \left(1 + r^{\mu_{j}} J_{2}^{j}(r, \theta) \right) = r^{\mu_{j}} + r^{2\mu_{j}} J_{2}^{j}(r, \theta) = y + y^{2} J_{3}^{j}(y, \theta),
\end{equation*}
if we denote $y = r^{\mu_{j}}$. As a consequence of the proof of Lemma \ref{The Seventh Seal}, $Z$ is a $C^{1}$ function in terms of $y$ and $\theta$. Thus, by possibly taking $r$ even smaller we are able to solve $\rho$ in terms of $y$, which allows us to deduce that $r^{\mu_{j}} = \rho + \rho^{2}K_{1}(\rho, \theta)$, with $K_{1}$ continuous and bounded. This implies that
\begin{equation} \label{Tropa de Elite II} 
r = \rho^{1/\mu_{j}} \left(1 + \rho K_{1}^{j}(\rho, \theta) \right)^{1/\mu_{j}}  = \rho^{1/\mu_{j}} + \rho^{1 + 1/\mu_{j}} K_{2}^{j} (\rho, \theta). 
\end{equation}

We deduce from \eqref{Back to the Future}, \ref{Back to the Future II}, \eqref{Eternal Sunshine of the Spotless Mind}, \eqref{The Imitation Game} and \eqref{Tropa de Elite II} that, for some $\varepsilon_{j} > 0$,
\begin{equation} \label{Adaptation}
\begin{split}
\widetilde{M} (\rho, \sigma) &= \widetilde{\Xi} (\rho, \sigma) \left[ \rho^{\frac{2m_{j}-4}{\mu_{j}}} \gamma_{1}(\rho, \sigma) \widetilde{F^j} +  \rho^{\frac{2m_{j}-4}{\mu_{j}}} \gamma_{2}(\rho, \sigma) \widetilde{G^j} + \rho^{\frac{2m_{j}-4}{\mu_{j}}} \gamma_{3}(\rho, \sigma) \widetilde{H^j} \right], \\
\ds \frac{\widetilde{A}}{\widetilde{C}}(\rho, \sigma) &= \widetilde{\Xi} (\rho, \sigma) \left[ \rho^{\frac{m_{j} - 3}{\mu_{j}}} \alpha_{1}^{j}(\sigma) + \rho^{\frac{m_{j} - 3}{\mu_{j}} + \varepsilon_{j}} \alpha_{2}^{j}(\rho, \sigma) \right], \\ 
\ds \frac{\widetilde{B}}{\widetilde{C}}(\rho, \sigma) &= \widetilde{\Xi} (\rho, \sigma) \left[ \rho^{\frac{m_{j} - 3}{\mu_{j}}} \beta_{1}^{j}(\sigma) + \rho^{\frac{m_{j} - 3}{\mu_{j}} + \varepsilon_{j}} \beta_{2}^{j}(\rho, \sigma) \right].
\end{split}
\end{equation}
By using expression \eqref{Tropa de Elite II} in \eqref{The Aviator}, we obtain
\begin{equation} \label{Lord of War}
\widetilde{L \overline{Z}}(\rho, \sigma) =  \widetilde{\Xi} (\rho, \sigma) \left[ \rho^{ \frac{m_{j} -3}{\mu_{j}} + 1} \psi_{1}^{j}(\sigma) + \rho^{\frac{m_{j} -3}{\mu_{j}} + 2} \psi_{2}^{j}(\rho, \theta) \right].
\end{equation}
It follows from \eqref{Adaptation} and \eqref{Lord of War} that \eqref{Corona} can be rewritten as 
\begin{align}
\frac{\partial \widetilde{h}^{j}}{\partial \overline{\zeta}} &= \left[\left(\ds \frac{\chi_{1}^{j}(\sigma) + \rho^{\epsilon_{j}} \chi_{2}^{j}(\rho, \theta)}{\rho e^{i \sigma}} \right)\widetilde{h}^{j} + \left(\ds \frac{\kappa_{1}^{j}(\sigma) + \rho^{\epsilon_{j}} \kappa_{2}^{j}(\rho, \theta)}{\rho e^{i \sigma}} \right)\overline{\widetilde{h}^{j}} \right] + \nonumber \\
&+ \left[\ds \frac{\rho^{\frac{m_{j} - 1}{\mu_{j}} } \iota_{1}^{j}(\rho, \sigma) \widetilde{F^j}}{\rho e^{i \sigma}} + \frac{\rho^{\frac{m_{j} - 1}{\mu_{j}} } \iota_{2}^{j}(\rho, \sigma) \widetilde{G^j}}{\rho e^{i \sigma}} + \frac{\rho^{\frac{m_{j} - 1}{\mu_{j}} } \iota_{3}^{j}(\rho, \sigma) \widetilde{H^j}}{\rho e^{i \sigma}}   \right], \label{Cries and Whispers}  
\end{align}
where $\epsilon_{j} = \min \left\{\varepsilon_{j}, 1 \right\}$. Observe that  $\rho e^{i\sigma} = \zeta - \zeta_{j}.$ Hence, in order to obtain an expression as in \eqref{2001}, it is sufficient to multiply both the numerator and the denominator of each term in the right hand-side of \eqref{Cries and Whispers}   by $D_{j}(\zeta)= \ds \prod_{\underset{\ell \neq j}{\ell = 1, \ldots, n}}(\zeta - \zeta_{\ell}), $ which finalizes the proof. 
\end{proof}
We can now proceed to the main result of the section.

\begin{theorem} \label{Prisoners}
Let $S\subset\R^3$ be a smooth surface given by \eqref{Schindler's List} with curvature $K$ satisfying \eqref{K-hypothesis}.
For every $k, m \in \N$,
there exist functions
$$U^{1}, U^{2}, \ldots, U^{m}: \Omega \to \R^{3}\quad\textrm{with}\quad
U^j\in C^k(\Omega)\,\cap\, C^\infty(\Omega \setminus \left\{p_{1}, p_{2}, \ldots, p_{n} \right\}),$$
such that each $U^j$ vanishes to order $\ge k $ at each point $p_1,\, \cdots\, p_n$ and such that
the deformation surface $S_\epsilon$ given by \eqref{Sunset Boulevard} is a nontrivial
infinitesimal bending of $S$ of order $m$.
\end{theorem}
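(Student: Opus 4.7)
My plan is to proceed by induction on $j$. By Propositions \ref{Wild Strawberries} and \ref{On the Waterfront}, the $j$-th order bending equation \eqref{Dr. Strangelove} reduces to solving the Bers-Vekua type equation \eqref{2001} for a single complex-valued function $\widetilde{h}^j$ on $Z(\Omega)$, whose source depends only on $U^1,\ldots,U^{j-1}$. Once $h^j$ is available, the tangential quantities $\varphi^j=R_s\cdot U^j$ and $\psi^j=R_t\cdot U^j$ are determined algebraically by \eqref{The Color Purple}, and the full vector field $U^j$ is then reconstructed from the expressions giving $u^j,v^j$ in terms of $(w^j,\varphi^j,\psi^j)$ from the proof of Proposition \ref{Wild Strawberries}; the normal component is obtained by algebraic inversion of one of the scalar equations of \eqref{Dr. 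Strangelove}, and the remaining equations collapse to a compatibility condition which is exactly what \eqref{Raging Bull} encodes.

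For the base case $j=1$, the source vanishes and I need a nontrivial homogeneous solution of \eqref{2001} with arbitrarily high order of vanishing at each $\zeta_\ell$. Off the singular set the coefficients are bounded, so the Vekua similarity principle writes every solution as $e^{s(\zeta)}F(\zeta)$ with $F$ holomorphic and $s$ bounded. The local form of the coefficients near each $\zeta_\ell$ from item~(1) of Proposition \ref{On the Waterfront} reduces the admissible boundary behavior to a spectral problem for a first-order periodic ODE on $\cir$ built from the principal parts $\chi_{1,\ell},\chi_{2,\ell}$; requiring $F$ to have enough zeros at each $\zeta_\ell$ then forces the desired vanishing of $h^1$. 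The infinite-dimensional freedom remaining in $F$ allows us to choose $U^1$ so that it is not congruent to a rigid-motion field $A\times R+B$, ensuring nontriviality.

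For the inductive step, assume that $U^1,\ldots,U^{j-1}$ have been built in $C^k(\Omega)\cap C^\infty(\Omega\setminus\{p_1,\ldots,p_n\})$, each vanishing to order at least $k$ at every $p_\ell$. Then \eqref{All the President's Men} yields that $F^j,G^j,H^j$ vanish to order at least $2k-2$ at each $p_\ell$, and combined with the intrinsic vanishing of $S_1,S_2,S_3$ from item~(2) of Proposition \ref{On the Waterfront} the inhomogeneity of \eqref{2001} vanishes to any prescribed order at each $\zeta_\ell$, provided $k$ was taken sufficiently large at the start of the induction. A Cauchy-transform and fixed-point argument on bounded functions over $Z(\Omega)$ supplies a particular bounded solution $h^j$; interior ellipticity of $\partial_{\bar\zeta}$ yields $C^\infty$-regularity away from the $\zeta_\ell$'s; and the Fuchs-type analysis near each $\zeta_\ell$ already used in the base step, applied now to the inhomogeneous equation with high-order vanishing forcing, produces the prescribed order of vanishing of $h^j$ at each $\zeta_\ell$. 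This in turn translates, via the reconstruction of $\varphi^j,\psi^j$ and $U^j$, to $C^k$-regularity of $U^j$ on $\Omega$ together with vanishing of order $\ge k$ at each $p_\ell$.

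The main obstacle is the detailed analysis of \eqref{2001} near each singular point $\zeta_\ell$, where the coefficients exhibit a genuine $1/(\zeta-\zeta_\ell)$ singularity and classical Bers-Vekua theory does not apply off the shelf. The rescue comes from the explicit structure provided by Proposition \ref{On the Waterfront} \textemdash\ a $2\pi$-periodic principal part plus a remainder of order $\rho^{\epsilon_\ell}$ \textemdash\ which permits a spectral/asymptotic analysis in polar coordinates $(\rho,\sigma)$ adapted to $\zeta_\ell$. Arranging simultaneously (i) solvability of the Bers-Vekua equation, (ii) $C^k$-regularity of $U^j$ across every flat point with the required order of vanishing, and (iii) nontriviality of $U^1$ at the inductive base, is the technical heart of the argument.
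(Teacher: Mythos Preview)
Your overall strategy matches the paper's: induction on $j$, reduction to the singular Bers--Vekua equation \eqref{2001}, and algebraic reconstruction of $U^j$ from $h^j$ via \eqref{The Color Purple} and the $3\times 3$ linear system for $(u^j,v^j,w^j)$. Two points deserve comment.

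First, the paper does not carry out the local Fuchs-type analysis you outline; it invokes Theorems~2.1 and~2.3 of the companion preprint \cite{lvm} as black boxes, which furnish respectively a nontrivial homogeneous solution of \eqref{2001} in $C^l$ vanishing to order $\ge l$ at each $\zeta_\ell$, and an inhomogeneous solution with prescribed vanishing when the source vanishes sufficiently. Your similarity-principle description ``$e^{s}F$ with $F$ holomorphic and $s$ bounded'' is not available here, since the coefficients $P/D$, $Q/D$ lie only in $L^p$ for $p<2$; solutions near each $\zeta_\ell$ generically exhibit non-integer power behavior, and producing solutions with \emph{arbitrarily high} vanishing order is exactly the content of \cite{lvm} that you identify as ``the technical heart.''

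Second, your inductive bookkeeping does not close as stated. Reconstructing $U^j$ from $h^j$ costs roughly $2m_i-3$ orders of vanishing at each $p_i$: one factor $m_i-2$ from dividing by $g(\lambda-\overline\lambda)$ in \eqref{The Color Purple}, another $m_i-2$ from dividing by $e$ in the matrix inversion, plus one derivative for $\varphi^j_s$. Hence assuming only order-$k$ vanishing for $U^1,\ldots,U^{j-1}$ does not yield order-$k$ vanishing for $U^j$, and ``take $k$ sufficiently large at the start'' is backwards since $k$ is prescribed by the statement. The paper's fix is to carry a stronger induction hypothesis---for \emph{every} $\ell\in\N$ there exist $U^1,\ldots,U^{j-1}$ vanishing to order $\ell$---and then, at step $j$, choose $\ell$ and the vanishing order $q$ of $\widetilde h^j$ large in terms of $k$ and the $m_i$. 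Since the bending order $m$ is finite your idea can be salvaged by tracking the cumulative loss over all $m$ steps, but this needs to be said. The paper's nontriviality argument is also more concrete than yours: any $U^1$ vanishing to order $\ge 1$ at $p_1$ cannot equal $A\times R+B$ unless $A=B=0$.
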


\begin{proof}
We prove this result by induction on the order $m$;  in general the idea is to solve \eqref{2001} and show it has a solution $\widetilde{h}^{j}$ that vanishes, to any prescribed order, at the points $\zeta_1,\,\cdots\, ,\zeta_n$. Then use $h^j=\widetilde{h}^j\circ Z$ to recover
the field $U^j$ through $h^j=LR\cdot U^j$ (Proposition \ref{Wild Strawberries}). 
 
Let $j= 1$; then $F^{1} = G^{1} = H^{1} = 0$, which turns the expression \eqref{2001} into
 \begin{equation} \label{The Dark Knight}
 	\ds \frac{\partial \widetilde{h}^{1}}{\partial \overline{\zeta}}=
 	\ds \frac{P(\zeta)}{D(\zeta)} \widetilde{h}^{1} + \ds \frac{Q(\zeta)}{D(\zeta)} \overline{\widetilde{h}}^{1}.
 \end{equation}
Applying \emph{Theorem} $2.3$ of \cite{lvm}, for any $l \in \N$ we are able to find a non-trivial solution for \eqref{The Dark Knight} in $ C^{l}(Z(\Om)) \cap C^{\infty}(Z(\Om)\backslash \{\zeta_1,\cdots ,\zeta_n\}),$ such that it vanishes to order $\ge l $ at each point $\zeta_1,\, \cdots\, \zeta_n$. Therefore, as a consequence of Proposition \ref{On the Waterfront},  $h^1=\widetilde{h}^1\circ Z$  solves \eqref{Raging Bull}  and vanishes to order $l \mu_i$ at the points $p_1, \ldots, p_{n}$.

Next we construct the field $U^1$ from $h^1$. As a consequence of  \eqref{The Color Purple}, we have  $ \varphi^{1} = \ds \frac{\lambda \overline{h}^{1} - \overline{\lambda} h^{1}}{(\lambda - \overline{\lambda})g }$ and  $\psi^{1} = \ds \frac{h^{1} - \overline{h}^{1}}{(\lambda - \overline{\lambda})}. $ These functions would be well defined at the flat points $p_1,\cdots ,p_n$ provided that $h^1$ vanishes
 to higher orders than those the vanishing of the denominators. This is indeed the case when
$ (l \cdot \mu_{i}) - (m_{i} -2) \geq 0$, since both $g$ and $\lambda - \overline{\lambda}$ have order of vanishing $m_i-2$ at $p_i$ (see \ref{Back to the Future} and \ref{Back to the Future II}).
 
 The field $U^1=(u^1,v^1,w^1)$ is related to $\varphi^1$ and $\psi^1$ via relations
 \eqref{Blade Runner}, \eqref{Blade Runner 2049} and \eqref{Star Wars}, which form the system
 \begin{equation} \nonumber
 	\begin{pmatrix}
 		\varphi^{1} \\
 		\psi^{1} \\
 		\varphi_{s}^{1} 
 	\end{pmatrix} =
 	\begin{pmatrix}
 		x_{s} & y_{s} & z_{s} \\
 		x_{t} & y_{t} & z_{t} \\
 		x_{ss} & y_{ss} & z_{ss}
 	\end{pmatrix}
 	\begin{pmatrix}
 		u^{1} \\
 		v^{1} \\
 		w^{1}
 	\end{pmatrix}=M(s,t)\begin{pmatrix}
 		u^{1} \\
 		v^{1} \\
 		w^{1}
 	\end{pmatrix}\, .
 \end{equation}
The determinant of the matrix $M$ is given by $R_{ss} \cdot (R_{s} \times R_{t}) = 
\left\|R_{s} \times R_{t} \right\| e.$ Thus
\begin{equation} \label{Kramer vs Kramer} 
 	\begin{split}
 		u^{1} &= \ds \frac{1}{\left\|R_{s} \times R_{t} \right\| e}  \left( \psi^{1} \begin{vmatrix}
 			y_{s} & z_{s} \\ y_{ss} & z_{ss}
 		\end{vmatrix}  -  \varphi^{1} \begin{vmatrix}
 			y_{t} & z_{t} \\ y_{ss} & z_{ss}
 		\end{vmatrix} + \varphi_{s}^{1} \begin{vmatrix}
 			y_{s} & z_{s} \\
 			y_{t} & z_{t}
 		\end{vmatrix}
 		\right),  \\
 		v^{1} &= \ds \frac{1}{\left\|R_{s} \times R_{t} \right\| e}  \left(- \varphi^{1}  \begin{vmatrix}
 			x_{t} & z_{t} \\ x_{ss} & z_{ss}
 		\end{vmatrix}  + \psi^{1}  \begin{vmatrix}
 			x_{s} & z_{s} \\ x_{ss} & z_{ss}
 		\end{vmatrix} - \varphi_{s}^{1} \begin{vmatrix}
 			x_{s} & z_{s} \\
 			x_{t} & z_{t}
 		\end{vmatrix}
 		\right),  \\
 		w^{1} &= \ds \frac{1}{\left\|R_{s} \times R_{t} \right\| e}  \left( \varphi^{1}  \begin{vmatrix}
 			x_{t} & y_{t} \\ x_{ss} & y_{ss}
 		\end{vmatrix}  - \psi^{1}  \begin{vmatrix}
 			x_{s} & y_{s} \\ x_{ss} & y_{ss}
 		\end{vmatrix} + \varphi_{s}^{1}  \begin{vmatrix}
 			x_{s} & y_{s} \\
 			x_{t} & y_{t}
 		\end{vmatrix}
 		\right).
 	\end{split}
 \end{equation}
 These functions are well defined if $\varphi^{1}, \psi^{1}$, $\varphi_{s}^{1}$ vanish to an order
 greater than $m_{i} - 2$ at each $p_{i}$. In this case $u^1$, $v^1$, and $w^1$ vanish to order
 \begin{equation}\label{Fences}
 	[l \mu_{i} - (m_{i} - 1)] - (m_{i} - 2) = l \mu_{i} - 2m_{i} + 3. 
 \end{equation}

Now consider $m^{\star} = \max \left\{m_{i}, \ i = 1, \ldots, n \right\}$ and $\mu_\star = \min \left\{\mu_{i}, \ i = 1, \ldots, n \right\}.$	Given $k \in \N$, by choosing $l \geq \ds \frac{2m^{\star} - 3  + k}{\mu_{\star}}$, we  have $l \mu_{i} - 2m_{i} + 3 \geq k$ for each $i \in \left\{1, 2, \ldots, n \right\}$.  This implies (from \eqref{Fences}) that $U^1=(u^1,v^1,w^1)$ vanishes to order greater than $ k$ at each $p_{i}$. Hence $U^1\in C^k(\Omega)\,\cap\, C^\infty(\Omega \setminus \left\{p_{1}, p_{2}, \ldots, p_{n} \right\})$. 

It remains to show that $U^{1}$ is not trivial. Suppose by contradiction that
\begin{equation*}
U^{1} = A \times R(s,t) + B, \ \ \ A, B \in \R^{3}.
\end{equation*}
Then $U_{s}^{1}(p_{1}) =  A \times R_{s}(p_{1}) = 0$ and $U_{t}^{1}(p_{1}) =  A \times R_{t}(p_{1}) = 0$, which implies that $A = 0$. Since $U^{1}$ vanishes at $p_{0}$, then $B = 0$ and $U^{1} \equiv 0$, which ends the first case.

Suppose next that the statement holds to order up to $j-1$; then for every $\ell \in \N$ there exist functions $U^{1}, U^{2}, \ldots, U^{j-1}\,\in\, C^\infty(\Omega\setminus\{p_1,\cdots ,p_n\},\R^3)\,\cap\, C^\ell(\Omega,\R^3),$
such that each field $U^r$ vanishes to order $\ell$ at each point $p_i$ and $$R^{j-1}_\epsilon (s,t)=R(s,t)+2\sum_{r=1}^{j-1}\epsilon^r U^r(s,t)$$ 
is an infinitesimal bending of order $j-1$ of $S$.

Since $U^1,\,\cdots ,\, U^{j-1}$ vanish to order $\ell$ at the points $p_i$, the functions $F^r$, $G^r$, and $H^r$ given by \eqref{All the President's Men} vanish to order $2 \ell - 2$. Hence their $Z$-Pushforwards
$\widetilde{F}^r$, $\widetilde{G}^r$, and $\widetilde{H}^r$ vanish to order
$\ds\frac{2\ell -2}{\mu_i}$ at the points $\zeta_i=Z(p_i)$.  Thus  (by Proposition \ref{On the Waterfront}) the nonhomogeneous term of equation \eqref{2001} vanishes to order $\rho_i = \ds \frac{2 \ell + m_{i} - 3}{\mu_{i}}  - 1$
at $\zeta_i$. 

By taking $\ell$ sufficiently large and applying \emph{Theorem} $2.1$ of \cite{lvm}, we obtain a solution $\widetilde{h}^j$ of \eqref{2001} that vanishes to order $q$ at each point $\zeta_1,\cdots ,\zeta_n$, for some $q$ that will be chosen later. 
Therefore $h^j=\widetilde{h}^j\circ Z$ solves \eqref{Raging Bull}  and vanishes to order
$q\mu_i$ at the point $p_i$, for every $i=1,\cdots, n$. Now we construct the field $U^j$ from $h^j$. Once again $ \varphi^{j} =  \ds \frac{\lambda \overline{h}^{j} - \overline{\lambda} h^{j}}{(\lambda - \overline{\lambda})g }$, $\psi^{j} = \ds \frac{h^{j} - \overline{h}^{j}}{(\lambda - \overline{\lambda})} $ and  these functions would be well defined  if $q  \mu_{i} - (m_{i} -2) \geq 0$. Once again the field $U^j=(u^j,v^j,w^j)$ is related to $\varphi^j$ and $\psi^j$ via relations
\eqref{Blade Runner}, \eqref{Blade Runner 2049} ,\eqref{Star Wars} and in this case
\begin{align*}
u^{j} &= \ds \frac{1}{\left\|R_{s} \times R_{t} \right\| e}  \left( \psi^{j} \begin{vmatrix}
y_{s} & z_{s} \\ y_{ss} & z_{ss}
  \end{vmatrix}  -  \varphi^{j} \begin{vmatrix}
y_{t} & z_{t} \\ y_{ss} & z_{ss}
  \end{vmatrix} + (\varphi_{s}^{j} - F^{j}) \begin{vmatrix}
y_{s} & z_{s} \\
y_{t} & z_{t}
\end{vmatrix}
 \right),  \\
 v^{j} &= \ds \frac{1}{\left\|R_{s} \times R_{t} \right\| e}  \left(- \varphi^{j}  \begin{vmatrix}
x_{t} & z_{t} \\ x_{ss} & z_{ss}
  \end{vmatrix}  + \psi^{j}  \begin{vmatrix}
x_{s} & z_{s} \\ x_{ss} & z_{ss}
  \end{vmatrix} - (\varphi_{s}^{j} - F^{j}) \begin{vmatrix}
x_{s} & z_{s} \\
x_{t} & z_{t}
\end{vmatrix}
 \right),  \\
 w^{j} &= \ds \frac{1}{\left\|R_{s} \times R_{t} \right\| e}  \left( \varphi^{j}  \begin{vmatrix}
x_{t} & y_{t} \\ x_{ss} & y_{ss}
  \end{vmatrix}  - \psi^{j}  \begin{vmatrix}
x_{s} & y_{s} \\ x_{ss} & y_{ss}
  \end{vmatrix} + (\varphi_{s}^{j} - F^{j}) \begin{vmatrix}
x_{s} & y_{s} \\
x_{t} & y_{t}
\end{vmatrix}
 \right).
\end{align*}
These functions will be well defined if $\varphi^{j}, \psi^{j}$, $\varphi_{s}^{j}$ and $F^{j}$ vanish to an order
greater than $m_{i} - 2$ at each $p_{i}$. In this situation $u^j$, $v^j$, and $w^j$ vanish to order
\begin{equation} \label{The Pianist}
r_{i} =\min \left\{[q \mu_{i} - (m_{i} - 1)], 2 \ell - 2  \right\} - (m_{i} - 2).
\end{equation}

Let $m^{\star}$ and $\mu_\star$ as in the previous case.  Given $k\in\N$, take $\ell$  large enough so 
\begin{equation*}
\text{$2 \ell \geq k + m^{\star}$ \ \ and  \ \ $q \geq \ds \frac{k + 2m^{\star} - 3}{\mu_{\star}}$.}
\end{equation*}
It follows from \eqref{The Pianist} that for such choices of $\ell$ and $q$, we have $r_{i} \ge k$  for each $i$ and the field $U^{j} = (u^{j}, v^{j}, w^{j})$  vanishes to order $k$ at each $p_{i}$. This completes the proof.
\end{proof}
\
\section{Infinitesimal Bendings of Graphs of  Homogeneous Functions}

In this section we study infinitesimal bendings of surfaces given as graphs of homogeneous functions. We prove that any such generic surface with nonnegative curvature has nontrivial infinitesimal bendings of high orders. The surfaces considered here are given by
\begin{equation} \label{Four Weddings and a Funeral}
S = \left\{R(s,t) = \left(s, t, z(s,t) \right); \ (s, t) \in \R^{2}  \right\},
\end{equation}
where $z$ is a nonnegative homogeneous function of order $m\ge 2$
(with $m$ not necessarily an integer), such that $z\in C^\infty(\R^2\setminus \{0\})$. We
assume that $S$ has nonnegative curvature and no asymptotic curves.
The Gaussian curvature of $S$ is
$$K(s,t)=\frac{z_{ss}z_{tt}-z_{st}^2}{(1+z_s^2+z_t^2)^2}\, .$$

It is more convenient here to use polar coordinates $s=r\cos\ta$, $t=r\sin\ta$, so that
$z = r^m P(\theta)$, with $P(\theta) \in C^{\infty}(S^{1})$. The assumption that $S$ has no asymptotic curves implies that  $P(\theta) > 0$ for all $\ta$ (if $P(\ta_0)=0$, then the ray $\ta=\ta_0$ would be an asymptotic curve).
The second derivatives of $z$ in polar coordinates are:
\begin{equation} \nonumber
\begin{split}
z_{ss} &= r^{m-2} \left[(m-1)m \cos^{2}\theta P - 2 (m-1) \sin \theta \cos \theta P' + m \sin^{2} \theta P + P'' \sin^{2} \theta \right], \\
z_{st} &= r^{m-2} \left[(m-2)m \sin \theta \cos \theta P + (m-1) \cos (2\theta) P' - P'' \sin \theta \cos \theta \right], \\
z_{tt} &= r^{m-2} \left[(m-1)m \sin^{2}\theta P + 2 (m-1) \sin \theta \cos \theta P' + m \cos^{2} \theta P + P'' \cos^{2} \theta \right]. \\
\end{split}
\end{equation}
It follows that
\begin{equation} \nonumber
z_{ss}z_{tt} - z_{st}^{2} = r^{2m-4} (m-1) \left[m^{2} P^{2}(\theta) + m P (\theta) P''(\theta) - (m-1) P'(\theta)^{2} \right].
\end{equation}

From now on we will assume that $K\ge 0$ and $K$ vanishes on at most a finite number of rays $\ta=\ta_1,\,\cdots ,\ta_\ell$.
This is equivalent to
\begin{equation} \label{Eyes Wide Shut}
\left[m^{2} P^{2}(\theta) + m P (\theta) P''(\theta) - (m-1) P'(\theta)^{2} \right] > 0, \quad
 \forall \theta \notin \{\ta_1,\cdots ,\ta_\ell\}.
\end{equation}
We write in a more convenient form the equation related to the bending fields $U^j=(u^j,v^j,w^j)$.
The functions $\varphi^j$ and $\psi^j$ related to $U^j$  by
\eqref{Blade Runner} and \eqref{Blade Runner 2049} take the form
\begin{equation} \label{Citizen Kane}
\varphi^{j} = u^{j} + z_{s} w^{j},  \quad  \psi^{j} = v^{j} + z_{t} w^{j}\, .
\end{equation}
In this situation   equation  \eqref{Vertigo} leads to the following system in polar coordinates
\begin{equation} \label{Forrest Gump}
\ds \frac{1}{r} \begin{pmatrix}
 \varphi^{j}  \\
 \psi^{j}
 \end{pmatrix}_{\theta} = A(\ta)  \begin{pmatrix}
 \varphi^{j}  \\
 \psi^{j}
 \end{pmatrix}_{r} + T(\ta)\begin{pmatrix}
 F^j\\ G^j\\ H^j\end{pmatrix},
\end{equation}
where $A(\ta) = \ds\frac{1}{(m^2-m)r^{m-2}P(\ta)}
\begin{pmatrix}
\alpha_{11}(r,\ta) & \alpha_{12}(r,\ta)\\ \alpha_{21}(r,\ta) & \alpha_{22}(r,\ta)\
\end{pmatrix}$,
with
\begin{equation*}
\begin{split}
\alpha_{11}(r,\theta) &= {\left(z_{tt} - z_{ss} \right) \sin \theta \cos \theta + 2 z_{st} \cos^{2} \theta}, \ \ \ \ \alpha_{12}(r,\theta) =  {- z_{ss}}, \\
\alpha_{21}(r,\theta) &= {z_{tt}}, \ \ \ \
\alpha_{22}(r,\theta) = {\left(z_{tt} - z_{ss} \right) \sin \theta \cos \theta - 2 z_{st} \sin^{2} \theta},
\end{split}
\end{equation*}
and $T(\ta)$ is the matrix $T(\ta)=\ds \frac{1}{(m^2-m)r^{m-2}P(\ta)}\begin{pmatrix}
\beta_{1}(r,\ta) & \beta_{2}(r,\ta) &\beta_3(r,\ta)\\
\gamma_{1}(r,\ta) & \gamma_{2}(r,\ta) &\gamma_3(r,\ta)
\end{pmatrix}$, 
with
\begin{equation}\label{Uone}
\begin{array}{ll}
\beta_{1}(r,\theta) ={(-2 z_{st} \cos \theta - z_{tt} \sin \theta)}, &
\gamma_{1}(r,\theta) = {- z_{tt} \cos \theta},\\
\beta_{2}(r,\theta) = { z_{ss} \cos \theta}, &
\gamma_{2}(r,\theta) ={- z_{tt} \sin \theta}, \\
\beta_{3}(r \theta)=  { z_{ss} \sin \theta}, &
\gamma_{3}(r,\theta) = {z_{ss} \cos \theta + 2 z_{st} \sin \theta}.
\end{array}
\end{equation}
Note that it follows from the homogeneity of $z$ that the matrices $A$ and $T$ are independent of the variable $r$.

Now we use the change of variable $\rho =rP(\ta)^{1/m}$
to transform \eqref{Forrest Gump} into
\begin{equation} \label{Patton}
\ds \frac{1}{\rho} \begin{pmatrix}
 \varphi^{j}  \\
 \psi^{j}
 \end{pmatrix}_{\theta}  = \Lambda(\ta)
\begin{pmatrix}
 \varphi^{j}  \\ \psi^{j}\end{pmatrix}_{\rho} + \widetilde{T}(\ta)
 \begin{pmatrix}\widetilde{F}^j(\rho,\ta)\\\widetilde{G}^j(\rho,\ta)\\\widetilde{H}^j(\rho,\ta)
 \end{pmatrix}\, ,
\end{equation}
where $\Lambda(\ta)=\ds \frac{1}{(m^2-m)r^{m-2}P(\ta)}\begin{pmatrix}
 z_{st} & -z_{ss}\\ z_{tt} & -z_{st}
 \end{pmatrix}$
has trace 0,
\begin{equation} \label{Scarface}
 \widetilde{T}(\ta) =\frac{T(\ta)}{P(\ta)^{1/m}}\quad\textrm{and}\quad
 \begin{pmatrix}
 \widetilde{F}^j(\rho,\ta)\\
\widetilde{G}^j(\rho,\ta)\\
\widetilde{H}^j(\rho,\ta)\end{pmatrix}=
\begin{pmatrix}
 {F}^j\left(\frac{\rho}{P(\ta)^{1/m}},\ta\right)\\
 {G}^j\left(\frac{\rho}{P(\ta)^{1/m}},\ta\right)\\
 {H}^j\left(\frac{\rho}{P(\ta)^{1/m}},\ta\right)
\end{pmatrix}\, .
\end{equation}

\begin{remark}
For $j=1$, $F^1=G^1=H^1=0$, and the homogenous system \eqref{Patton} is studied in
\cite{m3}. It is proved that for every $n\in\N$, the surface $S$ has nontrivial first order
infinitesimal bendings fields $U^1\in C^n(\R^2)$ of the form
\begin{equation} \label{The Terminator}
U^{1}(r, \theta) =
 \left(r^{\lambda_{p}}a^{1}(\theta), r^{\lambda_{p}}b^{1}(\theta), r^{\lambda_{p} + 1 - m} c^{1}(\theta)  \right),
\end{equation}
with $a^{1}, b^{1}, c^{1}\in C^{\infty}(S^{1})$ and $\lambda_{p}$ an eigenvalue of the system
\begin{equation} \label{The Hurt Locker}
X'{\theta)} = \lambda \Lambda(\theta) X(\theta).
\end{equation}
\end{remark}

With the first bending $U^1$ as given by \eqref{The Terminator}, it follows from \eqref{All the President's Men}
that
\begin{equation} \label{Indiana Jones and the Last Crusade}
\begin{split}
F^{2}(r,\ta) &= r^{2\lambda_{p} - 2} f_{1} (\theta) + r^{2\lambda_{p} - 2m} f_{2} (\theta), \\
G^{2}(r,\ta) &= r^{2\lambda_{p} - 2} g_{1}(\theta) + r^{2\lambda_{p} - 2m} g_{2} (\theta), \\
H^{2} (r,\ta) &= r^{2\lambda_{p} - 2} h_{1}(\theta) + r^{2\lambda_{p} - 2m} h_{2} (\theta),
\end{split}
\end{equation}
where $f_{i},\, g_{i},\, h_{i}\in C^{\infty}(S^{1})$ ($\, i=1,2$).
For such expressions of $F^2,\, G^2$ and $H^2$, equation \eqref{Patton} becomes
\begin{equation} \label{The Shawshank Redemption}
\ds \frac{1}{\rho}  \begin{pmatrix}
 \varphi^{2}  \\
 \psi^{2}
 \end{pmatrix}_{\theta}  = \Lambda(\ta)
\begin{pmatrix}
 \varphi^{2}  \\
 \psi^{2}
 \end{pmatrix}_{\rho} +
\rho^{2\lambda_{p} - 2}V_1(\ta) +  \rho^{2\lambda_{p} - 2m} V_2(\ta),
\end{equation}
with $V_1,\, V_2\, \in C^\infty(S^1,\R^2)$.
We seek solutions of
\eqref{The Shawshank Redemption} in the form
\begin{equation} \label{The Bridge on the River Kwai}
\begin{pmatrix}
 \varphi^{2}  \\
 \psi^{2}
 \end{pmatrix}  = \rho^{2\lambda_{p} - 1} X_{1}(\theta) + \rho^{2\lambda_{p} - 2m + 1} X_{2}(\theta),
\end{equation}
with $X_1,\, X_2\,\in C^\infty(S^1,\R^2)$. This leads to the following equations for $X_1,\, X_2$:
\begin{equation} \label{Saving Private Ryan}
\begin{array}{ll}
\ds\frac{dX_{1}}{d\theta} = (2\lambda_{p} - 1) \Lambda(\ta) X_1+V_1(\ta)\, \ \ \ \ \ds\frac{dX_{2}}{d\theta} = (2\lambda_{p} - 2m + 1)\Lambda(\ta)X_2 +V_2(\ta)\, .
\end{array}
\end{equation}

It follows from the classical theory of differential equations with periodic coefficients
(see Section 2.9 of \cite{ys} for instance) that if $V_1$ and $V_2$ are not zero and if  both
$(2\lambda_{p} - 1)$ and $(2\lambda_{p} - 2m + 1)$ are not eigenvalues of the periodic system
\eqref{The Hurt Locker}, then \eqref{Saving Private Ryan} has periodic solutions $X_1$ and $X_2$
(note that if $V_i=0$, the corresponding equation has a trivial periodic solution).
The next step is to understand the asymptotic behavior of the spectrum of \eqref{The Hurt Locker}
and show that there exist arbitrarily large $p\in\N$ such that $(2\lambda_{p} - 1)$ and
$(2\lambda_{p} - 2m + 1)$ are indeed not eigenvalues of \eqref{The Hurt Locker}.

Let $\ds J = \begin{pmatrix}0 & 1 \\ -1 & 0\end{pmatrix}$ and write system \eqref{The Hurt Locker}
in the
standard Hamiltonian form
\begin{equation} \label{Goodfellas}
J X'(\theta) = \lambda H(\theta) X(\theta),
\end{equation}
with
\begin{equation} \label{Lawrence of Arabia}
H(\ta)=\begin{pmatrix}h_{11} & h_{12}\\ h_{21} &h_{22} \end{pmatrix}
=J\Lambda=\frac{1}{(m^2-m)r^{m-2}P(\ta)}
\begin{pmatrix}z_{tt} & -z_{st}\\ -z_{st} & z_{ss} \end{pmatrix}\, .
\end{equation}
The assumption on the curvature $K$ given by \eqref{Eyes Wide Shut} implies that
the matrix $H$ is positive except possibly
 at points $\ta_1,\, \cdots,\, \ta_\ell\in S^1$.
It follows then from the Oscillation Theorem (see Theorem V in \cite{ys}  pg. 766) that the spectrum
\eqref{Goodfellas} (or equivalently \eqref{The Hurt Locker}) consists of a sequence
\begin{equation} \label{The French Connection}
\lambda_{1}^{-} \leq \lambda_{1}^{+} < \lambda_{2}^{-} \leq \lambda_{2}^{+} <
\ldots\lambda_j^-\le  \lambda_{j}^{+} < \ldots,
\end{equation}
with $\ds \lim_{j\to\infty}\lambda_j^\pm=\infty$. To get the asymptotic behavior of $\lambda_j^\pm$ we
introduce the following functions:
\begin{equation} \label{Being John Malkovich}
b(\theta) = \sqrt{\textrm{det}(H(\ta))}=
\frac{1}{(m^2-m)r^{m-2}P(\ta)}\sqrt{z_{tt}z_{ss}-z_{st}^2}\, ,
\end{equation}
and
\begin{equation} \label{Psycho}\begin{array}{ll}
\ds c_1(\ta)& =\ds\frac{-h_{21}}{b}\,\frac{h'_{11}}{h_{11}}=
\frac{z_{st}}{\sqrt{z_{tt}z_{ss}-z_{st}^2}}\, \frac{d\log(z_{tt})}{d\ta}\\ \\
\ds c_2(\ta)& =\ds\frac{-h_{21}}{b}\,\frac{h'_{22}}{h_{22}}=
\frac{z_{st}}{\sqrt{z_{tt}z_{ss}-z_{st}^2}}\, \frac{d\log(z_{ss})}{d\ta}.
\end{array}\end{equation}

\begin{proposition} \label{Do the Right Thing}
Let $b$, $c_1$ and $c_2$ be as in \eqref{Being John Malkovich}, \eqref{Psycho} and
suppose that $c_1,\, c_2$ are elements of $\ L^1([0,2\pi])$. Let
\begin{equation} \label{Rain Man}
b_{1} = \int\limits_{0}^ {2\pi} b(\theta) d \theta\quad\textrm{and}\quad
b_{2} = - \ds \frac{1}{4}\int\limits_{0}^{2\pi} (c_1(\ta)-c_2(\ta))\,d\ta.
\end{equation}
The eigenvalues \eqref{The French Connection} of \eqref{The Hurt Locker} have the following
asymptotic behavior as $j\to \infty$:
\begin{equation} \label{Spotlight}
\lambda_{j}^{\pm}  = \ds \frac{j \pi}{b_{1}} + \ds \frac{b_{2}}{b_{1}} + O \left(\ds \frac{1}{j} \right). \end{equation}
\end{proposition}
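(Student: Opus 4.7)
My approach is via a Pr\"ufer-type transformation that converts the Hamiltonian eigenvalue problem to a scalar first-order ODE for an angular variable, followed by a WKB/averaging analysis to extract both the leading term $j\pi/b_1$ and the constant correction $b_2/b_1$. Setting $X = \rho(\cos\phi,\sin\phi)^T$ in \eqref{Goodfellas} and using $x_1 x_2' - x_2 x_1' = \rho^2 \phi'$ together with the equations $x_2' = \lambda(h_{11}x_1 + h_{12}x_2)$, $-x_1' = \lambda(h_{12}x_1 + h_{22}x_2)$ yields
\[
\phi'(\theta) = \lambda Q(\theta,\phi), \quad Q(\theta,\phi) := h_{11}\cos^2\phi + 2h_{12}\sin\phi\cos\phi + h_{22}\sin^2\phi.
\]
The identity $\int_0^\pi d\phi/Q(\theta,\cdot) = \pi/b(\theta)$, obtained by the substitution $t = \tan\phi$ that reduces the integral to $\int dt/(h_{22}t^2 + 2h_{12}t + h_{11}) = (1/b)\arctan((h_{22}t+h_{12})/b) + \text{const}$, is what ultimately makes $b_1 = \int_0^{2\pi} b\,d\theta$ emerge as the leading-order frequency.

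I would then pass to the modified Pr\"ufer angle
\[
\psi(\theta,\phi) = \arctan\!\left(\frac{h_{22}(\theta)\tan\phi + h_{12}(\theta)}{b(\theta)}\right),
\]
continuously branched so that $\psi$ grows by $\pi$ whenever $\phi$ does. One checks $\partial_\phi\psi = b/Q$, so along a Pr\"ufer trajectory
\[
\frac{d\psi}{d\theta} = \partial_\phi\psi \cdot \phi' + \partial_\theta\psi = \lambda b(\theta) + \partial_\theta\psi(\theta,\phi(\theta)).
\]
Because $X(2\pi) = \pm X(0)$ forces $\phi(2\pi) - \phi(0) \in \pi\mathbb{Z}$, and because $\psi$ is $2\pi$-periodic in $\theta$ at fixed $\phi$ and grows in lockstep with $\phi$ modulo $\pi$, the eigenvalue condition reduces to the single scalar relation
\[
\lambda b_1 + \int_0^{2\pi}\partial_\theta\psi(\theta,\phi(\theta))\,d\theta = j\pi.
\]

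The remaining step is to evaluate the correction integral asymptotically. As $\lambda\to\infty$, $\phi$ oscillates with frequency $O(\lambda)$ in $\theta$, so the integral splits into an $O(1)$ constant plus an $O(1/\lambda)$ remainder. The constant is obtained by averaging $\partial_\theta\psi$ against the ergodic weight $b/(2\pi Q)$ of the fast flow $\phi' = \lambda Q$ (equivalently, via integration by parts in $\phi$ exploiting the fast oscillation). Using the rational-trigonometric identities $\int_0^\pi \cos^2\phi/Q^2\,d\phi = \pi h_{22}/(2b^3)$ and $\int_0^\pi \sin\phi\cos\phi/Q^2\,d\phi = -\pi h_{12}/(2b^3)$ (same Weierstrass substitution as above), and then integrating by parts in $\theta$ against the logarithmic derivatives $(\log h_{11})'$ and $(\log h_{22})'$, one identifies the averaged correction with $-b_2$. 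Substituting into the quantization yields $\lambda_j = j\pi/b_1 + b_2/b_1 + O(1/j)$, uniform across both branches $\lambda_j^\pm$.

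The main obstacle will be handling the (finitely many) points $\theta_1,\ldots,\theta_\ell$ where $H$ degenerates (so $b\to 0$), and pinning down the averaged correction precisely as $b_2$. The $L^1$ hypothesis on $c_1, c_2$ is exactly what ensures the correction integrals remain finite near the degenerate points, but making the averaging rigorous there requires care. Technically I would approximate $H$ by $H_\epsilon = H + \epsilon I$ (uniformly positive definite), establish the asymptotic for $H_\epsilon$ by the averaging argument above (every step justified by standard Krylov--Bogolyubov--Mitropolskii-type two-scale theorems), and pass to the limit $\epsilon\to 0$ using the $L^1$ convergence of the perturbed coefficients $c_1^\epsilon, c_2^\epsilon$ to $c_1, c_2$.
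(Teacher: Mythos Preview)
Your approach is genuinely different from the paper's, and in a sense more ambitious. The paper does not derive the asymptotic \eqref{Spotlight} from scratch: it perturbs $H$ to $H_\varepsilon = H + \varepsilon I$ (exactly your final idea), then simply \emph{cites} the asymptotic formula for strictly positive Hamiltonians from Iakubovich--Starzhinskii \cite{ys}, and finally passes to the limit $\varepsilon\to 0$ using dominated convergence on $b_1^\varepsilon,\,b_2^\varepsilon$ and analytic dependence of the monodromy on $\varepsilon$. Your Pr\"ufer/modified-Pr\"ufer/averaging scheme is essentially a reconstruction of what \cite{ys} does behind the scenes, so you are trading a one-line citation for a self-contained derivation; the regularization step $H\mapsto H+\varepsilon I$ and the limiting argument are shared.

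The main risk in your route is the step you flag yourself: ``one identifies the averaged correction with $-b_2$.'' This is the entire content of the second-order term, and it is not as automatic as your sketch suggests. Carrying out the $\phi$-average of $\partial_\theta\psi$ against the invariant density $b/(\pi Q)$ and then integrating in $\theta$ yields (after your $1/Q^2$ identities) an expression of the form $\displaystyle\int_0^{2\pi}\frac{1}{2b}\Bigl(h_{12}' - \frac{h_{22}'}{h_{22}}h_{12}\Bigr)\,d\theta$, which is \emph{not} manifestly equal to $\pm b_2$; reducing it to the symmetric form $\frac14\int \frac{h_{12}}{b}\bigl(\frac{h_{11}'}{h_{11}}-\frac{h_{22}'}{h_{22}}\bigr)$ requires a further integration by parts using $2bb' = h_{11}'h_{22}+h_{11}h_{22}'-2h_{12}h_{12}'$, and the sign must be tracked carefully (your choice of modified Pr\"ufer angle breaks the $h_{11}\leftrightarrow h_{22}$ symmetry, so the antisymmetry of $b_2$ only re-emerges after this manipulation). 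I would urge you to write this computation out in full rather than appeal to ``one identifies''; it is where a sign or factor error is most likely to hide, and it is precisely the place where the $L^1$ hypothesis on $c_1,c_2$ enters to make the integration by parts legitimate near the degenerate angles.
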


\begin{proof} Note that $b(\ta)>0$ for $\ta\ne\ta_i$, $i=1,\,\cdots ,\,\ell$;
this implies  $b_{1}>0$.
For $\varepsilon > 0$, consider the Hamiltonian
 $H_{\varepsilon}(\theta) = H(\ta)+\varepsilon I= \begin{pmatrix}  h_{11}+ \varepsilon  & h_{12} \\
 h_{21} &  h_{22} + \varepsilon  \end{pmatrix}.$
 Since $z_{tt}$ and $z_{ss}$
are nonnegative, then $h_{11}+\varepsilon$ and $h_{22}+\varepsilon$ are strictly positive and
\[
\textrm{Det}(H_\varepsilon)=\textrm{Det}(H)+\varepsilon \textrm{Trace}(H)+\varepsilon^2 \, >0\, ,
\quad\forall\ta\in\R\, .
\]
Thus matrix $H_\varepsilon$ is positive for every $\varepsilon >0$ and
the asymptotic behavior of the spectrum of the equation
\begin{equation} \label{Rear Window}
JX_\varepsilon' = \lambda H_{\varepsilon}(\theta) X_\varepsilon
\end{equation}
is given by (see p.776 \cite{ys})
\begin{equation} \nonumber
\lambda_{\varepsilon j}^{\pm} = \ds \frac{j \pi }{b^{\varepsilon}_{1}} + \ds \frac{b_{2}^{\varepsilon}}{b_{1}^{\varepsilon}} + O \left(\ds \frac{1}{j} \right),
\end{equation}
where
\[
b_{1}^{\varepsilon} = \int\limits_{0}^ {2\pi} \sqrt{\textrm{Det}(H_\varepsilon)}\, d\ta\quad\textrm{and}\quad
b_{2}^{\varepsilon}= \frac{-1}{4}\int\limits_{0}^ {2\pi}\frac{h_{12}}{\textrm{Det}(H_\varepsilon)}
\left(\frac{h'_{11}}{h_{11}+\varepsilon}-\frac{h'_{22}}{h_{22}+\varepsilon}\right)\, d\ta\, .
\]
Moreover, the hypothesis $c_1,\, c_2\,\in L^1$ and the Dominated Convergence Theorem imply that
$\ds b_1=\lim_{\varepsilon\to 0^+}b_{1}^{\varepsilon}$ and
$\ds b_2=\lim_{\varepsilon\to 0^+}b_{2}^{\varepsilon}$.
Finally, since the monodromy matrix of \eqref{Rear Window} is analytic with respect to $\varepsilon$,
it follows that for $j$ large,
$\ds \lambda_j^\pm=\lim_{\varepsilon\to 0^+}\lambda_{\varepsilon j}^{\pm}$ and the asymptotic
expansion \eqref{Spotlight} follows.
\end{proof}

Now we can prove the main results of this section.
\begin{theorem}\label{Homogeneous1}
Let $S\subset\R^3$ be the graph of a homogeneous function $z(s,t)$ of order $m\ge 2$
and satisfying \eqref{Eyes Wide Shut}. Suppose that conditions in Proposition \ref{Do the Right Thing} hold  and $b_1,\ b_2$ given by \eqref{Rain Man} satisfy
\begin{equation} \label{Born on the Fourth of July}
b_1-b_2\,\notin\,\pi\Z\quad\textrm{and}\quad
(2m-1)b_1-b_2\,\notin\,\pi\Z\, .
\end{equation}
Then for every $k \in \N$, there exist  $U^{1},\, U^{2}\,\in C^k( \R^2 ,\R^{3})$
such that the deformation of $S$ given by the position vector
\[
R(s,t)+2\epsilon U^1(s,t)+\epsilon^2 U^2(s,t)
\]
is a nontrivial infinitesimal bending of order 2.
\end{theorem}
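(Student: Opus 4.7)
The plan is to execute the construction laid out just before the theorem. Starting from an eigenvalue $\lambda_p$ of the periodic Hamiltonian system \eqref{The Hurt Locker}, I build $U^1$ of the form \eqref{The Terminator}; then I solve the inhomogeneous periodic systems \eqref{Saving Private Ryan} for $X_1,X_2$, form $\varphi^2,\psi^2$ via \eqref{The Bridge on the River Kwai}, and finally reconstruct $U^2=(u^2,v^2,w^2)$. For the graph parametrization, the first equation of \eqref{Dr. Strangelove} reads $u^2_s + z_s w^2_s = F^2$; combined with $\varphi^2_s = u^2_s + z_{ss} w^2 + z_s w^2_s$, this yields $w^2 = (\varphi^2_s - F^2)/z_{ss}$, and then $u^2 = \varphi^2 - z_s w^2$, $v^2 = \psi^2 - z_t w^2$ are forced by \eqref{Citizen Kane}.

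The crucial step is the choice of $p$. By Floquet theory, the inhomogeneous systems \eqref{Saving Private Ryan} admit smooth periodic solutions $X_1,X_2$ as soon as $2\lambda_p - 1$ and $2\lambda_p - 2m + 1$ are \emph{not} elements of the spectrum \eqref{The French Connection}. This is exactly where the hypotheses \eqref{Born on the Fourth of July} combine with the asymptotic \eqref{Spotlight}: a coincidence $2\lambda_p^\pm - 1 = \lambda_q^\pm$ asymptotically forces $(2p-q)\pi = b_1 - b_2 + O(1/p) + O(1/q)$, so with $\delta_1 = \mathrm{dist}(b_1 - b_2, \pi\Z) > 0$ we obtain $|2\lambda_p^\pm - 1 - \lambda_q^\pm| \geq \delta_1/(2b_1)$ for every $q$ and all sufficiently large $p$. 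Small eigenvalues are avoided automatically since $2\lambda_p - 1 \to \infty$. The symmetric argument, using $(2m-1)b_1 - b_2 \notin \pi\Z$, rules out $2\lambda_p - 2m + 1$ as an eigenvalue. Consequently infinitely many $p$ satisfy both non-resonance requirements.

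For any such $\lambda_p$, the resulting $\varphi^2,\psi^2$ behave near the origin like $r^{2\lambda_p - 1}$ and $r^{2\lambda_p - 2m + 1}$, so since $z_{ss}$ vanishes to order $m-2$ the function $w^2$ vanishes to order at least $2\lambda_p - 3m + 2$; analogous bounds hold for $u^2,v^2$ and for $U^1$ given by \eqref{The Terminator}. Taking $p$ large enough that $2\lambda_p - 3m + 2 \geq k$ (possible since $\lambda_p \to \infty$) produces $U^1, U^2 \in C^k(\R^2,\R^3)$. Nontriviality of the bending follows from that of $U^1$: if $U^1 = A \times R + B$, then $B = 0$ (since $R(0) = 0$) and, for $\lambda_p$ large, $U^1_s(0) = U^1_t(0) = 0$ gives $A \times (1,0,0) = A \times (0,1,0) = 0$, hence $A = 0$, contradicting the fact that $U^1$ comes from a nonzero eigenfunction of \eqref{The Hurt Locker}.

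The main obstacle is the spectral separation argument in the second paragraph: beyond the leading order of the asymptotic \eqref{Spotlight} one must confirm that the $O(1/j)$ error does not conspire with small indices to produce an accidental eigenvalue coincidence, so some uniform estimate (plus taking $p$ large) is required. Once this is in hand, Floquet solvability, the explicit reconstruction formulas, and the regularity bookkeeping are essentially routine.
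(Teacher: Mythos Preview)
Your proposal is correct and follows essentially the same approach as the paper: use the asymptotic \eqref{Spotlight} together with hypothesis \eqref{Born on the Fourth of July} to ensure that $2\lambda_p-1$ and $2\lambda_p-2m+1$ avoid the spectrum for all sufficiently large $p$, solve the periodic systems \eqref{Saving Private Ryan}, and then reconstruct $U^2$ with $C^k$ regularity by taking $p$ large. The only minor difference is that the paper recovers $w^2$ via the symmetrized formula $w^{2}=(\varphi^{2}_{s}+\psi^{2}_{t}-F^{2}-H^{2})/(z_{ss}+z_{tt})$ (see \eqref{Chinatown}) rather than your $w^{2}=(\varphi^{2}_{s}-F^{2})/z_{ss}$, which is slightly safer since $z_{ss}$ alone may vanish on a flat ray while the trace $z_{ss}+z_{tt}$ need not.
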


\begin{proof}
Let $k\in\N$; we already have the first field $U^1\in C^k$ given by \eqref{The Terminator} (with $p\in\N$ large enough).
Now we construct
$U^2=(u^2,v^2,w^2)$ through the functions $\varphi^2$, $\psi^2$ given by \eqref{The Bridge on the River Kwai}
and satisfying
\eqref{The Shawshank Redemption}. Let
\begin{equation*} \nonumber
\delta_{1} = \ds\min \left\{ \left| \ds \frac{a \pi}{b_{1}} + \ds \frac{b_{2}}{b_{1}} - 1 \right|, \ a \in \Z  \right\}, \ \ \delta_{2}  = \ds\min \left\{\left| \ds \frac{a \pi}{b_{1}} + \ds \frac{b_{2}}{b_{1}} +1 - 2m \right|, \ a \in \Z  \right\}\, 
\end{equation*}
and $\delta  = \ds\min \left\{\delta_{1}, \delta_{2} \right\}$. It follows from \eqref{Born on the Fourth of July} that $\delta > 0$.
Thanks to the asymptotic expansion \eqref{Spotlight}, we can choose $p$ large enough so that
\begin{equation}\label{rp}
\lambda_{p}^{\pm}  = \ds \frac{p \pi}{b_{1}} + \ds \frac{b_{2}}{b_{1}} + r_{p},
\end{equation}
with $|r_{p}| < \ds \frac{\delta}{6}$.
Now we show that there is no $q \in \N$ such that
$$2 \lambda_{p}^{\pm} - 1 = \lambda_{q}^{\pm}.$$
It follows from \eqref{Spotlight} that
\begin{equation} \nonumber
2 \lambda_{p}^{\pm} - 1 - \lambda_{q}^{\pm} = (2p - q) \ds \frac{\pi}{b_{1}} + \ds \frac {b_{2}}{b_{1}} - 1 + 2r_{p} - r_{q}.
\end{equation}

We can assume with no loss of generality that $q > p$ and thus $|r_{q}| < \ds \frac{\delta}{6}$. This implies that
\begin{equation} \nonumber
\left|2 \lambda_{p}^{\pm} - 1 - \lambda_{q}^{\pm} \right| \geq \left|(2p - q) \ds \frac{\pi}{b_{1}} + \ds \frac {b_{2}}{b_{1}} - 1 \right| - \left|2r_{p} - r_{q} \right| \geq \ds \frac{\delta}{2}>0\, .
\end{equation}
A similar argument shows that there is no $q \in \N$ such that
$$2 \lambda_{p}^{\pm} - 2m + 1 = \lambda_{q}^{\pm}.$$
As a consequence,
the periodic system \eqref{Saving Private Ryan} has solutions $X_1(\ta)$, $X_2(\ta)$.
We have therefore $\ds\begin{pmatrix} \varphi^2\\ \psi^2\end{pmatrix}$
a solution of \eqref{The Shawshank Redemption} in the form \eqref{The Bridge on the River Kwai}.
Using \eqref{Star Wars}, \eqref{The Empire Strikes Back}, and \eqref{Return of Jedi}, we get
\begin{equation} \label{Chinatown}
w^{2} = \ds\frac{\varphi^{2}_{s} + \psi^{2}_{t} - F^{2} - H^{2}}{z_{ss} + z_{tt}}
= r^{2 \lambda_{p} - m} \gamma_{1}(\theta) +  r^{2 \lambda_{p} - 3m + 2} \gamma_{2}(\theta),
\end{equation}
and then from   \eqref{Citizen Kane} and \eqref{Chinatown} we obtain
\begin{equation} \label{Jaws}
\begin{split}
u^{2} &=  r^{2 \lambda_{p} - 1} \alpha_{1}(\theta) + r^{2 \lambda_{p} - 2m + 1} \alpha_{2}(\theta), \\
v^{2} &=  r^{2 \lambda_{p} - 1} \beta_{1}(\theta) + r^{2 \lambda_{p} - 2m + 1} \beta_{2}(\theta), \\
\end{split}
\end{equation}
with $\alpha_i,\, \beta_i,\, \gamma_i\, \in C^\infty(S^1)$ for $i=1,2$. By taking $p\in\N$ large enough,
we get $U^1,\, U^2\, \in C^k(\R^2,\R^3)$.
\end{proof}

\begin{theorem}\label{Homogeneous2}
Let $S\subset\R^3$ be the graph of a homogeneous function $z(s,t)$ of order $m\ge 2$
and satisfying \eqref{Eyes Wide Shut}. Suppose that conditions in Proposition \ref{Do the Right Thing} hold  and $b_1,\ b_2$ given by \eqref{Rain Man} satisfy
\begin{equation} \label{Born on the Fourth of July 2}
\left(b_2\mathbb{N}+b_1\mathbb{Z}\right)\,\cap\,
\left(mb_1\mathbb{N}+\pi\mathbb{Z}\right)\, =\, \emptyset.
\end{equation}
Then for every $k, l \in \N$, there exist
 $U^{1},\,\cdots\, , U^{l}\,\in  C^k( \R^2 ,\R^{3})$
such that the deformation of $S$ given by the position vector
\[
R(s,t)+2\epsilon U^1(s,t)+\cdots 2\epsilon^l U^l(s,t)
\]
is a nontrivial infinitesimal bending of order $l$.
\end{theorem}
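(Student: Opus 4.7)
The plan is induction on $l$. The base case $l=2$ is Theorem~\ref{Homogeneous1}, whose two forbidden equalities \eqref{Born on the Fourth of July} are precisely the instances at $l=2$ of the general arithmetic obstruction derived below. Assume inductively that $U^1,\ldots,U^{l-1}\in C^k(\R^2,\R^3)$ have already been built and that each $U^i = (u^i,v^i,w^i)$ has the modal form
\[
u^i,\,v^i \;=\; \sum_{K=0}^{i-1} r^{\,i\lambda_p - (i-1) - 2K(m-1)}\, A^{i,K}_{u,v}(\ta), \quad w^i \;=\; \sum_{K=0}^{i-1} r^{\,i\lambda_p - (i-1) - (m-1) - 2K(m-1)}\, A^{i,K}_{w}(\ta),
\]
with $A^{i,K}_\bullet \in C^\infty(\cir)$. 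This matches \eqref{The Terminator} at $i=1$ and \eqref{Chinatown}--\eqref{Jaws} at $i=2$, and is the natural shape to propagate.

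A direct exponent count in \eqref{All the President's Men}, using that $\pa_s, \pa_t$ each shift an $r$-exponent by $-1$ and that $w$-modes carry an additional $-(m-1)$ shift, gives
\[
F^l,\, G^l,\, H^l \;=\; \sum_{K=0}^{l-1} r^{\,l\lambda_p - l - 2K(m-1)}\, (\cdot)_K(\ta),
\]
with smooth $2\pi$-periodic coefficients. After the change of variable $\rho = rP(\ta)^{1/m}$ of \eqref{Patton}--\eqref{Scarface}, this modal form is preserved on the forcing side of the system for $(\varphi^l,\psi^l)$. Seeking a solution of the same type
\[
\begin{pmatrix}\varphi^l\\ \psi^l\end{pmatrix} \;=\; \sum_{K=0}^{l-1} \rho^{\,\sigma_K}\, X_K(\ta), \qquad \sigma_K \;:=\; l\lambda_p - l + 1 - 2K(m-1),
\]
decouples the PDE into a family of $2\pi$-periodic linear ODEs
\[
X_K'(\ta) \;=\; \sigma_K\, \Lambda(\ta)\, X_K(\ta) + V_K(\ta),
\]
each of which admits a periodic solution provided $\sigma_K$ is not an eigenvalue of the Hill-type system \eqref{The Hurt Locker}.

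The main obstacle is the simultaneous spectral avoidance $\sigma_K \notin \{\lambda_q^\pm\}_{q\in\N}$ at every induction step. Substituting the asymptotic expansion \eqref{Spotlight} for both $\lambda_p$ and $\lambda_q$ into $\sigma_K = \lambda_q^\pm$ and multiplying by $b_1$ gives, after rewriting $l-1 + 2K(m-1) = (l-1-2K) + 2Km$,
\[
(l-1)b_2 + \bigl(2K-(l-1)\bigr)b_1 \;=\; 2Km\cdot b_1 + (q - lp)\pi \;+\; o(1),
\]
where the $o(1)$ remainders vanish as $p,q\to\infty$. The left-hand side lies in $b_2\N + b_1\Z$ (with $a = l-1\ge 1$, $b = 2K-(l-1)\in\Z$), and, for $K\ge 1$, the right-hand side lies in $mb_1\N + \pi\Z$ (with $c = 2K\ge 2$, $d = q-lp\in\Z$); hypothesis \eqref{Born on the Fourth of July 2} therefore forces a strictly positive gap between the two sides. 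The boundary case $K=0$ reduces to $(l-1)(b_1-b_2)\notin\pi\Z$, absorbed into the same hypothesis under the natural convention on $\N$. Choosing $p$ large enough that all remainders are smaller than this gap yields solvability of the ODEs for every $K$ and every $q$.

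Finally, $U^l = (u^l,v^l,w^l)$ is recovered exactly as in Theorem~\ref{Homogeneous1}: $w^l = (\varphi^l_s + \psi^l_t - F^l - H^l)/(z_{ss}+z_{tt})$ from \eqref{Star Wars}--\eqref{Return of Jedi}, and $u^l,v^l$ from \eqref{Citizen Kane}. A quick check of exponents shows that $U^l$ inherits the modal form at level $l$, closing the induction; choosing $p$ large enough guarantees $U^l \in C^k(\R^2,\R^3)$. Nontriviality of the bending follows from the first-order piece $U^1$ exactly as in the proof of Theorem~\ref{Homogeneous1}.
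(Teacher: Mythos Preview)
Your proof is correct and follows essentially the same inductive scheme as the paper: assume a modal form for $U^1,\ldots,U^{l-1}$, read off the modal form of $F^l,G^l,H^l$, decouple \eqref{Patton} into a finite family of periodic ODEs of the form \eqref{Minority Report}, and use the asymptotic expansion \eqref{Spotlight} together with hypothesis \eqref{Born on the Fourth of July 2} to rule out spectral collisions. The only difference is that you track the exponents explicitly as $i\lambda_p-(i-1)-2K(m-1)$, whereas the paper carries them as generic integer combinations $q\lambda_p-m\nu_i+\mu_i$; this makes your spectral-avoidance computation more concrete (and forces you to single out the boundary case $K=0$), but the underlying argument is the same.
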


\begin{proof}
We use an induction argument on the order $l$. Suppose that
there exist $U^{1},\,\cdots\, , U^{l-1}\,\in  C^k( \R^2 ,\R^{3})$
such that
\[
R(s,t)+2\epsilon U^1(s,t)+\cdots 2\epsilon^{l-1} U^{l-1}(s,t)
\]
is a nontrivial infinitesimal bending of order $l-1$ and
for $q\in\{ 1,\cdots , l-1\}$,
$U^q(r,\ta )=\left(u^q(r,\ta),v^q(r,\ta), w^q(r,\ta) \right)$, with
\begin{equation}\label{Hollywood}\begin{array}{ll}
u^q(r,\ta) & = r^{q\lambda_p-m\nu_1+\mu_1}\alpha_1(\ta)+\cdots+
r^{q\lambda_p-m\nu_\sigma+\mu_\sigma}\alpha_\sigma(\ta),\\
v^q(r,\ta) & = r^{q\lambda_p-m\nu_1+\mu_1}\beta_1(\ta)+\cdots+
r^{q\lambda_p-m\nu_\sigma+\mu_\sigma}\beta_\sigma(\ta),\\
w^q(r,\ta) & = r^{q\lambda_p-m(\nu_1+1)+\mu_1+1}\gamma_1(\ta)+\cdots+
r^{q\lambda_p-m(\nu_\sigma+1)+\mu_\sigma+1}\gamma_\sigma(\ta)\, ,
\end{array}
\end{equation}
where $\sigma,\, \nu_1,\, \mu_1,\, \cdots\, ,\nu_\sigma,\, \mu_\sigma$ are
integers that depend only on the index $q$,\\
$\, \alpha_i,\, \beta_i,\, \gamma_i\,\in C^\infty(S^1)$ for $i=1,\cdots ,\sigma$, and where
$\lambda_p$ is a spectral value of the equation \eqref{Indiana Jones and the Last Crusade}
that can be chosen arbitrarily large.
Note that the components of the bending fields $U^1$ and $U^2$ constructed in the proof of
Theorem \ref{Homogeneous1} have the form given in \eqref{Hollywood}
(see \eqref{The Terminator}, \eqref{Chinatown}, and \eqref{Jaws}).

It follows from \eqref{Hollywood}   that the functions
$F^l$, $G^l$, and $H^l$ defined by \eqref{All the President's Men} have the form
\begin{equation}\label{Waco}\begin{array}{ll}
F^l(r,\ta) & = r^{l\lambda_p-m s_1+t_1}f_1(\ta)+\cdots+
r^{l\lambda_p-ms_\tau+t_\tau}f_\tau(\ta),\\
G^l(r,\ta) & = r^{l\lambda_p-m s_1+t_1}g_1(\ta)+\cdots+
r^{l\lambda_p-ms_\tau+t_\tau}g_\tau(\ta),\\
H^l(r,\ta) & = r^{l\lambda_p-m s_1+t_1}h_1(\ta)+\cdots+
r^{l\lambda_p-ms_\tau+t_\tau}h_\tau(\ta),\\
\end{array}
\end{equation}
where $\tau,\, s_1,\, t_1,\, \cdots\, ,s_\tau,\, t_\tau$ are
integers that depend only on the index $l$, and where
$\, f_i,\, g_i,\, h_i\,\in C^\infty(S^1)$ for $i=1,\cdots ,\tau$.
With $F^l$, $G^l$, and $H^l$ as in \eqref{Waco}, we construct the next
bending field $U^l=(u^l,v^l,w^l)$ through the functions $\varphi^l$ and $\psi^l$
given by \eqref{Citizen Kane}. In this situation equation, \eqref{Patton} becomes
\begin{equation} \label{Django}
\ds \frac{1}{\rho} \begin{pmatrix}
 \varphi^{l}  \\
 \psi^{l}
 \end{pmatrix}_{\theta}  = \Lambda(\ta)
\begin{pmatrix}
 \varphi^{l}  \\ \psi^{l}\end{pmatrix}_{\rho} +
 \rho^{l\lambda_p-m s_1+t_1}V_1(\ta)+\cdots +
 \rho^{l\lambda_p-m s_\tau+t_\tau}V_\tau(\ta)\, ,
\end{equation}
with $V_1,\cdots ,V_\tau \in C^\infty(S^1,\R^2)$.  We seek solutions of
\eqref{Django} in the form
\begin{equation}\label{Benjamin Button}
\begin{pmatrix}
 \varphi^{l}  \\
 \psi^{l}
 \end{pmatrix} =\rho^{l\lambda_p-m s_1+t_1+1}X_1(\ta)+\cdots +
 \rho^{l\lambda_p-m s_\tau+t_\tau +1}X_\tau(\ta),
\end{equation}
with $X_1,\cdots ,X_\tau \in C^\infty(S^1,\R^2)$.
This leads to the following periodic differential equations for the $X_i$'s:
\begin{equation}\label{Minority Report}
X_i'(\ta)=\left(l\lambda_p-m s_i+t_i+1\right)\,\Lambda(\ta)\, X_i(\ta) +V_i(\ta)\, .
\end{equation}
By using the asymptotic expansion \eqref{rp}  of the spectral value $\lambda_p$
(with $r_p$ arbitrarily small for $p$ large enough),
an argument similar to that used in the proof of Theorem \ref{Homogeneous1} shows that for
$p$ large enough, $\, l\lambda_p-ms_\sigma+t_\sigma +1$ cannot be a spectral value.
Indeed, if there were arbitrarily large $p\in\mathbb{N}$ such that
\[
l\lambda_p-ms_\sigma+t_\sigma +1 =\lambda_q\, ,
\]
then necessarily
\[
(l-1)b_2+(t_\sigma +1)b_1=m b_1 s_\sigma +(q-lp)\pi\ + r_{p,q} ,
\]
with $r_{p,q} \to 0$ when $p,q \to \infty$, which contradicts hypothesis \eqref{Born on the Fourth of July 2}.
This implies that the system of equations \eqref{Minority Report} has a periodic solution and so
there exist $\varphi^l$, $\psi^l$ as in \eqref{Benjamin Button} satisfying \eqref{Django}.
Since $w^l=\ds \frac{\varphi^l_s+\psi^l_t-F^l-H^l}{z_{ss}+z_{tt}}\,$,
then it follows from \eqref{Waco} and \eqref{Benjamin Button} that
\[\left\{\begin{array}{ll}
w^l & \ds =\sum_{j=1}^lr^{l\lambda_p-m(s_j+1)+t_j+2}\gamma_j(\ta)\, ,\\
u^l & \ds =\varphi^l-z_sw^l=\sum_{j=1}^lr^{l\lambda_p-ms_j+t_j+1}\alpha_j(\ta)\, ,\\
v^l & \ds =\psi^l-z_tw^l=\sum_{j=1}^lr^{l\lambda_p-ms_j+t_j+1}\beta_j(\ta)\, .\\
\end{array}\right.\]
Thus for a given $k\in\mathbb{N}$,  if $p$ is large enough, $U^l=(u^l,v^l,w^l)$ as in \eqref{Hollywood}
is in $C^k(\R^2,\R^3)$
and $\ds R(s,t)+2\sum_{j=1}^l\epsilon^jU^j(s,t)$ is an infinitesimal bending of order $l$.
\end{proof}

\begin{remark}
When $m\in\mathbb{N}$, condition \eqref{Born on the Fourth of July 2} reduces to $\left( b_2\mathbb{N}+b_1\Z\right)\cap\pi \Z =\emptyset\, .$
\end{remark}

\section{Analytic Infinitesimal Bendings of a  Class of Surfaces}
We describe here the structure of all analytic infinitesimal bendings of a
particular class of surfaces given as the graph of functions of the form
$s^{m+2}\pm t^{n+2}$, with $m,n$ positive integers. We show that the space
of such infinitesimal bendings is generated by four arbitrary analytic functions of
one real variable. The basic ingredient needed 
is the equation in $\R^2$ given by
\begin{equation} \label{the good}
x^mw_{yy}\, +\varepsilon y^nw_{xx}=0,\quad \varepsilon =1\ \textrm{or}\ -1\, .
\end{equation}
To describe the analytic solutions of \eqref{the good}
we will need some technical lemmas.

\begin{lemma}\label{doubleseries1}
The double series
$\ds \sum_{m,k\ge 0}\left(\frac{(m+k)!}{m!\, k!}\right)^2 X^kY^m $
is uniformly convergent in the bidisc $|X|<1/4$ and $|Y|<1/4$.
\end{lemma}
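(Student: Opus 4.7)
The plan is to dominate the given double series term-by-term by a convergent geometric double series, using the elementary binomial bound $\binom{m+k}{k}\le 2^{m+k}$. First I would observe that, since every term in the binomial expansion is nonnegative, $2^{m+k}=(1+1)^{m+k}=\sum_{j=0}^{m+k}\binom{m+k}{j}\ge \binom{m+k}{k}$; squaring gives $\left(\frac{(m+k)!}{m!\,k!}\right)^2\le 4^{k}\cdot 4^{m}$.

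Next I would apply the Weierstrass M-test. Fix any $r\in(0,1/4)$ and consider the closed subbidisc $\{(X,Y):|X|\le r,\,|Y|\le r\}$. On this set each term of the series is dominated by $(4r)^k(4r)^m$, and the majorizing geometric series $\sum_{m,k\ge 0}(4r)^k(4r)^m=1/(1-4r)^2$ converges because $4r<1$. Hence the original series converges absolutely and uniformly on this subbidisc, which yields uniform (in fact absolute and uniform) convergence on every compact subset of the open bidisc $\{|X|<1/4,\,|Y|<1/4\}$, as claimed.

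I do not expect any substantial obstacle: the binomial inequality $\binom{m+k}{k}\le 2^{m+k}$ is as elementary as it gets, and the Weierstrass M-test disposes of the uniformity. As a sanity check on the sharpness of the radius $1/4$ (not needed for the stated lemma but useful to internalize), Stirling's formula gives $\binom{2n}{n}\sim 4^n/\sqrt{\pi n}$, so the diagonal subseries $\sum_{n}\binom{2n}{n}^2(XY)^n$ has behavior $\sim (16|XY|)^n/(\pi n)$, showing the radii $|X|,|Y|<1/4$ cannot be enlarged.
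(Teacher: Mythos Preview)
Your proof is correct and follows exactly the paper's approach: the same elementary bound $\binom{m+k}{k}\le 2^{m+k}$ followed by comparison with the geometric double series $\sum (4|X|)^k(4|Y|)^m$. You simply supply more detail (the binomial proof of the inequality, the explicit Weierstrass M-test on closed subbidiscs, and the optional sharpness remark) than the paper's two-line argument.
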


\begin{proof}
Note that $\ds \frac{(m+k)!}{m!\, k!}\le 2^{m+k}$. Therefore
\[
\sum_{m,k\ge 0}\left(\frac{(m+k)!}{m!\, k!}\right)^2 |X|^k|Y|^m\le
\sum_{m,k\ge 0} (4|X|)^m(4|Y|)^k
\]
and the conclusion follows.
\end{proof}

We will use the following notations: $D$ stands for the operator
$\ds D=\frac{1}{z^m}\,\frac{d^2}{dz^2}$ where $z$ denotes a complex or real variable; and
for $\alpha,\,  \beta\in\Z^+$,
\[ A_\beta^\alpha=\prod_{k=1}^\beta[k(\alpha+2)-1][k(\alpha+2)]\quad\textrm{and}\quad
 B_\beta^\alpha=\prod_{k=1}^\beta[k(\alpha+2)][k(\alpha+2)+1]\,. \]
We will also denote $A_0^\alpha =1$ and $B_0^\alpha =1$.

\begin{lemma}\label{convergence1}
Let $h(z)$ be a holomorphic function in the disc $D(0,R)$ and let $M(X,Y)$ be the function
defined by
\[
M(X,Y)=\sum_{j\ge 0}H_j(X^{m+2})Y^{j(n+2)}\, ,
\]
where
\[
H_j(X^{m+2})=\frac{1}{A_j^n}D^j\left[h(X^{m+2})\right]\, .
\]
There exists a positive constant $C=C(m,n)$ that depends only on $m,n$ such that
the function $M$ is holomorphic for $\ds |X| <CR^{1/(m+2)}$ and $\ds |Y| <CR^{1/(n+2)}$.
In particular, if $h$ is an entire function in $\C$, then $M$ is an entire function in $\C^2$.
\end{lemma}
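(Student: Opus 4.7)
The plan is to make the double series $M(X,Y)$ completely explicit in the Taylor coefficients of $h$, and then to sandwich the ratio $A_{j+p}^{m}/(A_{j}^{n}A_{p}^{m})$ between two quantities of the form $K^{j+p}\binom{j+p}{p}^{2}$, so that Lemma \ref{doubleseries1} immediately yields convergence.

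First, I would write $h(z)=\sum_{k\ge 0}a_{k}z^{k}$ and invoke the Cauchy estimates to obtain, for every $\rho_{0}<R$, a constant $M_{0}=M_{0}(\rho_{0})$ with $|a_{k}|\le M_{0}\rho_{0}^{-k}$. Next, I would compute the action of $D=X^{-m}d^{2}/dX^{2}$ on the building blocks $X^{k(m+2)}$: a direct calculation gives
\[
D\left[X^{k(m+2)}\right]=k(m+2)\bigl(k(m+2)-1\bigr)X^{(k-1)(m+2)},
\]
so that $D$ sends the lattice of monomials $\{X^{k(m+2)}\}_{k\ge 0}$ into itself. Iterating and telescoping yields, for $k\ge j$,
\[
D^{j}\left[X^{k(m+2)}\right]=\frac{A_{k}^{m}}{A_{k-j}^{m}}\,X^{(k-j)(m+2)},
\]
and $D^{j}[X^{k(m+2)}]=0$ when $k<j$. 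Substituting $k=j+p$ turns $H_{j}(X^{m+2})$ into a power series in $X^{m+2}$, and therefore
\[
M(X,Y)=\sum_{j,p\ge 0}a_{j+p}\,\frac{A_{j+p}^{m}}{A_{j}^{n}\,A_{p}^{m}}\,X^{p(m+2)}\,Y^{j(n+2)}.
\]

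The key step is then the comparison of the products $A_{j}^{m}$ and $A_{j}^{n}$ with elementary quantities. Since $k(\alpha+2)/2\le k(\alpha+2)-1\le k(\alpha+2)$ for every $k\ge 1$ and $\alpha\ge 0$, one obtains the sandwich
\[
\frac{(\alpha+2)^{2j}(j!)^{2}}{2^{j}}\ \le\ A_{j}^{\alpha}\ \le\ (\alpha+2)^{2j}(j!)^{2},
\]
from which a short computation produces a constant $K=K(m,n)>0$, for instance $K=\max\bigl\{2,\,2(m+2)^{2}/(n+2)^{2}\bigr\}$, such that
\[
\frac{A_{j+p}^{m}}{A_{j}^{n}\,A_{p}^{m}}\ \le\ K^{j+p}\binom{j+p}{p}^{2}.
\]
This is the only delicate (though elementary) part of the argument, and is essentially the main obstacle; the rest is automatic.

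Plugging this bound into the series for $M$ gives
\[
|M(X,Y)|\le M_{0}\sum_{j,p\ge 0}\binom{j+p}{p}^{2}\left(\tfrac{K|X|^{m+2}}{\rho_{0}}\right)^{p}\left(\tfrac{K|Y|^{n+2}}{\rho_{0}}\right)^{j},
\]
and by Lemma \ref{doubleseries1} the right-hand side converges uniformly on compact subsets of the region $K|X|^{m+2}/\rho_{0}<1/4$ and $K|Y|^{n+2}/\rho_{0}<1/4$. Letting $\rho_{0}\to R^{-}$ and choosing $C=C(m,n)=\min\bigl\{(4K)^{-1/(m+2)},(4K)^{-1/(n+2)}\bigr\}$, we obtain holomorphy of $M$ on the bidisc $|X|<CR^{1/(m+2)}$, $|Y|<CR^{1/(n+2)}$. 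If $h$ is entire, $R$ is arbitrary, and $M$ is then entire on $\C^{2}$, which completes the proof.
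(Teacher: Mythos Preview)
Your argument is correct and is essentially the same as the paper's: you compute $D^{j}\bigl[X^{k(m+2)}\bigr]=\dfrac{A_{k}^{m}}{A_{k-j}^{m}}X^{(k-j)(m+2)}$, sandwich $A_{\beta}^{\alpha}$ between multiples of $(\alpha+2)^{2\beta}(\beta!)^{2}$ to reduce $\dfrac{A_{j+p}^{m}}{A_{j}^{n}A_{p}^{m}}$ to $K^{j+p}\binom{j+p}{p}^{2}$, and then invoke Lemma~\ref{doubleseries1}. The only cosmetic differences are that the paper reaches the same double sum via the Cauchy integral formula rather than by inserting the Taylor series of $h$, and uses the slightly sharper lower bound $A_{\beta}^{\alpha}\ge(\alpha+2)^{\beta}(\alpha+1)^{\beta}(\beta!)^{2}$ in place of your $2^{-\beta}(\alpha+2)^{2\beta}(\beta!)^{2}$, which leads to a different (but equally valid) explicit constant $C(m,n)$.
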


\begin{proof}
First note that for $q\ge 0$, $\ds D(X^{q(m+2)})=[q(m+2)]\,[q(m+2)-1]X^{(q-1)(m+2)}$
and it is easily verified  that for $j\le q$
\[
D^j\left(X^{q(m+2)}\right)=\frac{A^m_q}{A^m_{q-j}}X^{(q-j)(m+2)}\, .
\]
Let $0<\rho <R$ be arbitrary. For $j\ge 0$ and $|X|^{m+2}<\rho$ we have
\begin{align*}\begin{array}{ll}
D^j\left[h(X^{m+2})\right] &=\ds\frac{1}{2\pi i}
\int_{|\zeta|=\rho}\!\! h(\zeta)D^j\left(\frac{1}{\zeta -X^{m+2}}\right)\, d\zeta\\ \\
&
\ds =\frac{1}{2\pi i}\sum_{q\ge j}\int_{|\zeta|=\rho}\!\! \frac{h(\zeta)}{\zeta^{1+q}}D^j(X^{q(m+2)})\, d\zeta\\ \\
& \ds =\frac{1}{2\pi i}\sum_{p\ge 0}\frac{A^m_{p+j}}{A^m_p}\int_{|\zeta|=\rho}\!\!
\frac{h(\zeta)}{\zeta^{1+j}}\left(\frac{X^{m+2}}{\zeta}\right)^p\, d\zeta\, .
\end{array}\end{align*}
Thus
\[
\left|D^j\left[h(X^{m+2})\right]\right|\le\, ||h||\,\sum_{p\ge 0}\frac{A^m_{p+j}}{A^m_p}\frac{1}{\rho^j}
\left(\frac{|X|^{m+2}}{\rho}\right)^p,
\]
where $||h||$ denotes the maximum of $h$ in the disc. It follows that
\begin{align}\label{M-estimate}\begin{array}{ll}
|M(X,Y)| & \le \ds \sum_{j\ge 0} |H_j(X^{m+2})| \, |Y^{n+2}|^j \\
&\le \ds ||h|| \, \sum_{j\ge 0 }\sum_{p\ge 0}\frac{A^m_{p+j}}{A^m_pA^n_j}\left(\frac{|X|^{m+2}}{\rho}\right)^p
\left(\frac{|Y|^{n+2}}{\rho}\right)^j\, .
\end{array}\end{align}
Now we estimate the coefficient $\ds \frac{A^m_{p+j}}{A^m_pA^n_j}$. For $\alpha ,\, \beta\in\Z^+$, we have
\begin{align*}\begin{array}{ll}
A_\beta^\alpha & =\ds (\alpha +2)^{2\beta}\prod_{k=1}^\beta k^2\left(1-\frac{1}{k(\alpha +2)}\right) \, \le (\alpha+2)^{2\beta} (\beta!)^2,\\
A_\beta^\alpha & \ge \ds (\alpha +2)^{2\beta}\prod_{k=1}^\beta k^2\left(1-\frac{1}{(\alpha +2)}\right) =
(\alpha+2)^{\beta}(\alpha+1)^\beta (\beta!)^2\, .
\end{array}\end{align*}
These inequalities imply that
\begin{equation}\label{combinatorial}
\frac{A^m_{p+j}}{A^m_pA^n_j} \le
\left(\frac{m+2}{m+1}\right)^p\, \left(\frac{(m+2)^2}{(n+2)(n+1)}\right)^j\left(\frac{(p+j)!}{p!\,j!}\right)^2\, .
\end{equation}
It follows from estimates \eqref{M-estimate}  and \eqref{combinatorial}  that
\begin{equation}\label{M-estimate2}
|M(X,Y)|\le ||h||\, \sum_{j,p}\!\frac{((p+j)!)^2}{(p!)^2\,(j!)^2}
\left[\frac{m+2}{m+1}\frac{|X|^{m+2}}{\rho}\right]^p\,
\left[\frac{(m+2)^2}{(n+2)(n+1)}\frac{|Y|^{n+2}}{\rho}\right]^j.
\end{equation}
Finally the conclusion follows from \eqref{M-estimate2} and Lemma \ref{doubleseries1} where  the constant $C$ can be
taken as
\begin{equation} \label{Seven}
C=\min\left[ \left(\frac{m+1}{4(m+2)}\right)^{1/(m+2)},\left(\frac{(n+2)(n+1)}{4(m+2)^2}\right)^{1/(n+2)} \right]\, .
\end{equation}
\end{proof}

 Then we have the following proposition
\begin{proposition}\label{w-equation}
Let $R > 0$ and  $C$ given in \eqref{Seven}.
A function $w$ is an analytic solution of \eqref{the good} for $|x|, |y|<R$ if
and only if there exist $h^1,\, h^2,\, h^3\,$ and $h^4$ analytic functions of one real variable $t$ in
the interval $|t|< \min \left\{\left(\ds \frac{R}{C} \right)^{m+2}, \left(\ds \frac{R}{C} \right)^{n+2} \right\}$
such that
\begin{equation}\label{the bad}
w(x,y)=\sum_{p=0}^\infty
\left[H^1_{p}(x^{m+2})+xH^2_{p}(x^{m+2})+yH^3_{p}(x^{m+2})+
xyH^4_{p}(x^{m+2})\right]y^{p(n+2)}\, ,
\end{equation}
where
\begin{equation}\label{the ugly}
\begin{array}{lll}
\ds H^i_p(x^{m+2}) & =\ds \frac{(-\varepsilon)^p}{A^n_p}\, D^p\left(h^i(x^{m+2})\right) &\quad\textrm{for}\ i=1,\ 2\, ;\\ \\
\ds H^i_p(x^{m+2}) & =\ds \frac{(-\varepsilon)^p}{B^n_p}\, D^p\left(h^i(x^{m+2})\right) &\quad\textrm{for}\ i=3,\ 4\, ,
\end{array}\end{equation}
 In particular, $w$ is analytic on $\R^{2}$ if and only if $h_{1}, h_{2}, h_{3}$ and $h_{4}$ are analytic on $\R$.
\end{proposition}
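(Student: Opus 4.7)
I would prove the two directions separately: the forward direction by convergence via Lemma \ref{convergence1} plus a termwise PDE check, and the converse by extracting the four $h^i$ from the recursion that \eqref{the good} imposes on the Taylor coefficients of $w$.

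For the sufficiency, split $w=w_1+w_2+w_3+w_4$ along the four summands in \eqref{the bad}; each $w_i$ has the shape $\sum_p H^i_p(x^{m+2})\,g_i(x,y)\,y^{p(n+2)}$ with $g_i\in\{1,x,y,xy\}$. Convergence on $|x|,|y|<R$ follows directly from Lemma \ref{convergence1} applied to each $h^i$ (the extra factors of $x$ or $y$ are harmless, and the denominator $B^n_p$ of $H^3_p,H^4_p$ satisfies the same asymptotic bounds as $A^n_p$). The PDE check reduces to the single identity
\[
D\bigl[H^i_p(x^{m+2})\bigr] \;=\; -\varepsilon\,\frac{\lambda^i_{p+1}}{\lambda^i_p}\, H^i_{p+1}(x^{m+2}),
\qquad \lambda^i_p=\begin{cases}A^n_p & i=1,2,\\ B^n_p & i=3,4,\end{cases}
\]
which is immediate from \eqref{the ugly} and $D^{p+1}=D\circ D^p$. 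Term by term, the factor $k(k-1)$ (with $k=p(n+2)$ or $1+p(n+2)$) produced by differentiating $y^k$ twice matches $\lambda^i_{p+1}/\lambda^i_p$ after the index shift $p\mapsto p+1$, giving $x^m w_{yy}+\varepsilon y^n w_{xx}=0$ termwise.

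For the necessity, expand $w=\sum_{j,k\ge 0}a_{j,k}x^j y^k$. Matching the coefficient of $x^{j'}y^{k'}$ in \eqref{the good} forces $a_{j,k}=0$ whenever $2\le j\le m+1$ or $2\le k\le n+1$ and, for $j'\ge m$, $k'\ge n$, yields the recursion
\[
a_{p,q} \;=\; -\varepsilon\,\frac{(q+n+2)(q+n+1)}{p(p-1)}\,a_{p-m-2,\,q+n+2},\qquad p\ge m+2,\ q\ge 0.
\]
Consequently $a_{j,k}$ is nonzero only when $(j\bmod(m+2),\,k\bmod(n+2))\in\{0,1\}^2$, and each such coefficient is determined by one of the four initial sequences $\{a_{\alpha,\beta+p(n+2)}\}_{p\ge 0}$ indexed by $(\alpha,\beta)\in\{0,1\}^2$. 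Labelling these by $i\in\{1,2,3,4\}$ with $(\alpha_i,\beta_i)=(0,0),(1,0),(0,1),(1,1)$, I would define $h^i$ as the one-variable power series with coefficients $c^i_p=(-\varepsilon)^p\lambda^i_p\,a_{\alpha_i,\beta_i+p(n+2)}/A^m_p$. Iterating the recursion $i$ times produces a telescoping product equal to $A^n_{i+p}/A^n_p$ in the numerator and $A^m_i$ in the denominator, which is exactly what appears when $D^p$ is applied to $h^i(x^{m+2})$ inside \eqref{the bad}.

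The main obstacle is pinning down the radii of convergence on both sides exactly as stated. For sufficiency the radius is Lemma \ref{convergence1} read verbatim. For necessity, Cauchy's estimates give $|a_{\alpha,\beta+p(n+2)}|\le M_r/r^{p(n+2)}$ for every $r<R$, and one bounds $\lambda^i_p/A^m_p$ using the inequalities $A^\alpha_\beta \le (\alpha+2)^{2\beta}(\beta!)^2$ and $A^\alpha_\beta\ge(\alpha+2)^\beta(\alpha+1)^\beta(\beta!)^2$ (with analogues for $B^n_p$) already used in the proof of Lemma \ref{convergence1}, which is exactly where the constant $C(m,n)$ from \eqref{Seven} enters. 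The final assertion of the proposition then follows by letting $R\to\infty$ in the quantitative equivalence.
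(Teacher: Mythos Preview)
Your proposal is correct and follows essentially the same route as the paper. The only organizational difference is that the paper expands $w$ in $y$ alone, writing $w(x,y)=\sum_{j}\alpha_j(x)y^j$, so that the PDE yields the operator recursion $\alpha_{p(n+2)}=\frac{(-\varepsilon)^p}{A^n_p}D^p\alpha_0$ and $\alpha_{p(n+2)+1}=\frac{(-\varepsilon)^p}{B^n_p}D^p\alpha_1$ directly (this is where $D$ and the constants $A^n_p$, $B^n_p$ enter), and then argues that analyticity of every $D^p\alpha_0$ forces $\alpha_0(x)=h^1(x^{m+2})+xh^2(x^{m+2})$. Your double-index recursion on the $a_{j,k}$ is the same argument unpacked at the scalar-coefficient level; your initial data $\{a_{\alpha,\beta+p(n+2)}\}_p$ along the $y$-axis and the paper's $\{a_{r(m+2)+\alpha,\beta}\}_r$ along the $x$-axis are linked by iterating the recursion and produce the same $h^i$. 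On the radii, the paper is in fact less careful than you are: it simply cites Lemma~\ref{convergence1} for convergence of \eqref{the bad} and does not separately estimate the radius of the $h^i$ in the necessity direction, so your caution there is warranted but not a gap relative to the original.
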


\begin{proof}
Suppose that $w(x,y)$ is an analytic solution of \eqref{the good}. 
We expand $w$ with respect to $y$ as:
\begin{equation}\label{ozark}
w(x,y)=\sum_{j=0}^\infty\alpha_j(x)y^j\, ,
\end{equation}
where the $\alpha_j$ are real analytic functions of $x$. Equation \eqref{the good}
leads to
\[
\sum_{j=0}^{n-1}(j+2)(j+1)x^m\alpha_{j+2}(x)y^j+
\sum_{j=n}^{\infty}\left[(j+2)(j+1)x^m\alpha_{j+2}(x)+\varepsilon\alpha''_{j-n}(x)\right]y^j=0.
\]
Therefore,
\begin{equation}\label{chaplin}\left\{\begin{array}{ll}
\alpha_k(x) & =0\quad \textrm{for}\ k=2,\,\cdots\, ,n+1\, , \\
\alpha_k(x) & =\ds \frac{-\varepsilon}{k(k-1)}D\alpha_{k-(n+2)} \quad \textrm{for}\ k\ge n+2\, ,
\end{array}\right.\end{equation}
where $D$ is the operator defined above. It follows at once by induction  from \eqref{chaplin} that
$\alpha_k =0$ whenever $k\ \textrm{ and} \ k-1\notin (n+2)\Z^+$.

For $k=n+2$, we have $\alpha_{n+2}=\ds\frac{-\varepsilon}{A_1}D\alpha_0$ and then for $k=2(n+2)$
we get
\[
\alpha_{2(n+2)}=\ds\frac{-\varepsilon}{[2(n+2)][2(n+2)-1]}D\alpha_{n+2}=\frac{(-\varepsilon)^2}{A_2}D^2\alpha_0\, .
\]
An induction shows that
\begin{equation}\label{Orson}
\alpha_{p(n+2)}=\ds\frac{(-\varepsilon)^p}{A^n_p}D^p\alpha_0\,, \ \ \ \alpha_{p(n+2)+1}=\ds\frac{(-\varepsilon)^p}{B^n_p}D^p\alpha_1\,.
\end{equation}
 
Since each $\alpha_{p(n+2)}$ and $\alpha_{p(n+2)+1}$ is an analytic function of $x$, then \eqref{Orson}  imposes restrictions on the power series representations
$\alpha_0(x)=\ds \sum a_jx^j$ and $\alpha_1(x)=\ds\sum b_jx^j$. Indeed, we have
\[
\alpha_{n+2}(x)=\frac{-\varepsilon}{A^n_1}D\alpha_0(x)=\frac{-\varepsilon}{A^n_1}\sum_{j=2}^\infty j(j-1)a_jx^{j-(m+2)},
\]
and so $a_2=\,\cdots\, =a_{m+1}=0$. This gives
\[
a_{n+2}(x)=\frac{-\varepsilon}{A^n_1}\sum_{k=m+2}^\infty \left[k+(m+2)\right]\left[k+(m+1)\right]a_{k+(m+2)}x^{k}\, .
\]
By repeating the above argument and an induction on the relation
$\alpha_{(p+1)(n+2)}=M D\alpha_{p(n+2)}$ (with $M$ nonzero constant), we can prove that
\[
a_j=0\ \textrm{if}\ j\ne q(m+2)\ \textrm{and}\ j\ne q(m+2)+1\ \textrm{with}\ q\in\Z\, .
\]
We have then
\[
\alpha_0(x)=\sum_{r=0}^\infty a_{r(m+2)}x^{r(m+2)}+\sum_{r=0}^\infty  a_{r(m+2)+1}x^{r(m+2)+1}\, .
\]
Let $h^1(t)=\ds\sum_{r=0}^\infty a_{r(m+2)}t^r$ and $h^2(t)=\ds\sum_{r=0}^\infty a_{r(m+2)+1}t^r$.
Then
\begin{equation}\label{John}
\alpha_0(x)=h^1(x^{m+2}) +xh^2(x^{m+2})\, .
\end{equation}

Similar arguments show that there exist analytic functions $h^3(t)$ and $h^4(t)$ such that
\begin{equation}\label{Wayne}
\alpha_1(x)=h^3(x^{m+2}) +xh^4(x^{m+2})\, .
\end{equation}
Expression \eqref{the bad} of the proposition follows from \eqref{ozark}, \eqref{chaplin}, \eqref{Orson}, \eqref{John}, and \eqref{Wayne} and the convergence follows from Lemma \ref{convergence1}.
Conversely, given analytic functions $h^1,\, h^2,\, h^3\,$ and $h^4$ as in the statement of the proposition,
it is clear that the function $w(x,y)$ defined by \eqref{the bad} is an analytic solution of the \eqref{the good}
\end{proof}

For $m,n\in \Z^+$, let $S_{m,n}\subset\R^3$ be the surface given by
\[
S_{m,n}=\left\{ (s,t, s^{m+2}+\varepsilon t^{n+2}),\ \ (s,t)\in\R^2 \right\}\, .
\]
Denote by $\mathrm{IB}_{m,n}(\rho)$ the space of real analytic infinitesimal bendings of $S_{m,n}$
in the square $|s|<\rho$, $|t|<\rho$, with $0<\rho \le \infty$, and by
$\mathcal{A}(\rho)$
the space of $\R$-valued real analytic functions in the interval $(-\rho\, ,\ \rho)$.
Then we have the following theorem.

\begin{theorem}\label{AnalyticBendings}
$\mathrm{IB}_{m,n}(\rho)$
is isomorphic to $\mathcal{A}(\rho)^4$.
\end{theorem}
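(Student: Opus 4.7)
The plan is to reduce the first-order bending equations for $U = (u, v, w)$ to a single scalar PDE for $w$ of the form \eqref{the good}, invoke Proposition \ref{w-equation}, and then reconstruct $(u, v)$ from $w$.

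For $R(s, t) = (s, t, s^{m+2} + \varepsilon t^{n+2})$, the system \eqref{Dr. Strangelove} (with $F^1 = G^1 = H^1 = 0$) becomes
\[
u_s + (m+2)s^{m+1} w_s = 0, \quad v_t + \varepsilon(n+2)t^{n+1} w_t = 0,
\]
\[
u_t + v_s + (m+2)s^{m+1} w_t + \varepsilon(n+2)t^{n+1} w_s = 0.
\]
The substitutions $U = u + (m+2)s^{m+1}w$, $V = v + \varepsilon(n+2)t^{n+1}w$ convert this to $U_s = (m+2)(m+1)s^m w$, $V_t = \varepsilon(n+2)(n+1)t^n w$, $U_t + V_s = 0$. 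Differentiating the third equation and applying the equality of mixed partials $V_{sst} = V_{tss}$ forces
\[
(m+2)(m+1) s^m w_{tt} + \varepsilon (n+2)(n+1) t^n w_{ss} = 0.
\]
The rescaling $s = \lambda x$, $t = \mu y$ with $(m+2)(m+1)\lambda^{m+2} = (n+2)(n+1)\mu^{n+2} = 1$ turns $\tilde{w}(x, y) = w(\lambda x, \mu y)$ into a solution of the canonical equation \eqref{the good}. By Proposition \ref{w-equation}, analytic solutions are in bijection with quadruples $(h^1, h^2, h^3, h^4)$ of analytic functions of one variable.

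Given such a $w$, I would reconstruct $(u, v)$ by integrating the first two equations:
\[
U(s, t) = (m+2)(m+1)\int_0^s \sigma^m w(\sigma, t)\, d\sigma + F(t),
\]
\[
V(s, t) = \varepsilon(n+2)(n+1)\int_0^t \tau^n w(s, \tau)\, d\tau + G(s).
\]
Substituting into $U_t + V_s = 0$ yields $F'(t) + G'(s) = -A(s) - B(t)$ for explicit $A, B$, where $(A + B)_{st} \equiv 0$ thanks to the PDE for $w$; hence $F$ and $G$ are determined up to three integration constants. Setting $u = U - (m+2)s^{m+1}w$ and $v = V - \varepsilon(n+2)t^{n+1}w$ recovers the bending, determined by $w$ modulo a three-dimensional family.

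To produce the isomorphism $\mathrm{IB}_{m,n}(\rho) \cong \mathcal{A}(\rho)^4$, I would fix a canonical section via the normalization $u(0, 0) = v(0, 0) = u_t(0, 0) = 0$; under this choice each bending corresponds uniquely to the quadruple attached to $w$ via Proposition \ref{w-equation}. Equivalently, the three leftover constants can be absorbed into the quadruple by the vector-space splitting $\mathcal{A}(\rho) \cong \R \oplus x\mathcal{A}(\rho)$ applied three times. The main technical obstacle is the interplay of convergence radii: Proposition \ref{w-equation} relates solutions on $|x|, |y| < R$ to $h^i$ on a smaller interval governed by the constant $C(m, n) < 1$, so for finite $\rho$ one must verify the correspondence really lands in $\mathcal{A}(\rho)^4$; this is cleanest in the entire case $\rho = \infty$, where both sides collapse to $\mathcal{A}(\infty)^4$.
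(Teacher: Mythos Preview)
Your approach is essentially the paper's: reduce the bending system to a scalar equation for $w$, rescale linearly to the canonical form \eqref{the good}, and invoke Proposition \ref{w-equation}. You are actually more careful than the paper about the reconstruction of $(u,v)$ from $w$ and the three-dimensional kernel of $U\mapsto w$; the paper absorbs both into a one-line reference to \cite{v}. For the convergence-radii concern you flag at the end, the paper's resolution is simply that $\mathcal{A}(C\rho^\mu)\cong\mathcal{A}(\rho)$ via a linear rescaling of the variable, so the mismatch between the domain of $w$ and the domains of the $h^i$ is harmless at the level of vector-space isomorphism; you can close your finite-$\rho$ case the same way rather than retreating to $\rho=\infty$.
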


\begin{proof}
Let $U(s,t)=\left(u(s,t),v(s,t),w(s,t)\right)\in \mathrm{IB}_{m,n}(\rho)$. In this particular
case the system of equations \eqref{Dr. Strangelove} becomes
\begin{align*}
u_s+(m+2)s^{m+1}w_s & =0\\
u_t+v_s+(m+2)s^{m+1}w_t+\varepsilon (n+2)t^{n+1}w_s & =0\\
v_t +\varepsilon (n+2)t^{n+1}w_t & =0
\end{align*}
As in \cite{v} we can reduce this system into a single equation for $w$ to obtain
\begin{equation}\label{clint}
(m+2)(m+1)s^mw_{tt}+\varepsilon (n+2)(n+1)t^nw_{ss}=0\, ,
\end{equation}
and any solution of \eqref{clint} gives rise to an infinitesimal bending $U$.
Equation \eqref{clint} is equivalent to \eqref{the good} through the linear change
of variables $x=Ps$ and $y=Qt$ where
\begin{align*}
\ds P=\left[(m+2)(m+1)\right]^{\frac{n}{mn-4}}\left[(n+2)(n+1)\right]^{\frac{2}{mn-4}},\\
\ds Q=\left[(n+2)(n+1)\right]^{\frac{m}{mn-4}}\left[(m+2)(m+1)\right]^{\frac{2}{mn-4}}\, ,
\end{align*}
when $mn\ne 4$. When $mn=4$, another simple linear change of variables transforms equation \eqref{clint}
to \eqref{the good}.
Proposition \ref{w-equation} establish
an isomorphism between real analytic solutions of \eqref{clint} and $\mathcal{A}(C\rho^\mu)^4$
(for some $C$ and $\mu$ positive) and
thus between $\mathrm{IB}_{m,n}(\rho)$ and
$\mathcal{A}(C\rho^\mu)^4$ . Finally, since $\mathcal{A}(C\rho^\mu)$ is
clearly isomorphic
to $\mathcal{A}(\rho)$, this completes the proof.
\end{proof}

\begin{remark}
It should be noted that surfaces given as graphs of functions of the form $f(s)+g(t)$ are bendable
and some bendings are given in \cite{d}. However, the above theorem characterizes all real analytic infinitesimal
bendings.
\end{remark}

\vskip 0.1in

\noindent{\bf Acknowledgements.} Part of this work was done when the first author was visiting the Department of Mathematics \& Statistics at FIU (Florida International
University). He would like to thank the members of the institution for the support provided during his visit.


\begin{thebibliography}{ABCD}

\bibitem[AU]{au} Achil'diev, A. I., \& Usmanov, Z. D., {\em Rigidity of a surface with a point of flattening}, Matematicheskii Sbornik, \textbf{115(1)}(1967), 89-96.



\bibitem[D]{d} Dorfman, A. G., {\em Solution of the Bending Equation for Certain Classes of Surfaces}, Uspekhi Matematicheskikh Nauk, \textbf{12(2)}(1957), 147-150.

\bibitem[LVM]{lvm} de Lessa Victor, B.,  \&  Meziani, A., {\em  A Generalized CR equation with isolated singularities}, 
preprint (2021).


\bibitem[M1]{m1} Meziani, A., {\em Nonrigidity of a class of two dimensional surfaces with positive curvature and planar points.}, Proceedings of the American Mathematical Society, {\bf 141.6} (2013), 2137-2143.

\bibitem[M2]{m2} Meziani, A.,  {\em Infinitesimal Bendings of Surfaces With Nonnegative Curvature}, Recent Progress on Some Problems in Several Complex Variables and Partial Differential Equations: International Conference, Partial Differential Equations and Several Complex Variables, Wuhan University, Wuhan, China, June 9-13, 2004 [and] International Conference, Complex Geometry and Related Fields, East China Normal University, Shanghai, China, June 2-24, 2004, {\bf  Vol. 400}, American Mathematical Soc., 2006.

\bibitem[M3]{m3} Meziani, A., {\em Infinitesimal bendings of homogeneous surfaces with nonnegative curvature}, Communications in Analysis and Geometry, \textbf{11(4)} (2003), 697-719.

\bibitem[M4]{m4} Meziani, A., {\em Solvability of planar complex vector fields with applications to deformation of surfaces}, Complex Analysis, Birkhäuser Basel (2010), 263-278.

\bibitem[N]{ni}
Niordson, F. I. {\em Shell Theory}, North-Holland Series Appl. Math. and Mechanics, Vol. 29, Amesterdam, (1985).

\bibitem[P]{po}
Pogorelov, A. V. {\em Bendings of surfaces and stability of shells}, Nauka, Moscow: Engl. Transl. AMS, Providence, R.I. (1988).

\bibitem[R]{ro} Rozendorn, E. R. {\em Surfaces of negative curvature}, Geometry III, Springer, Berlin, Heidelberg, (1992), 87-178.

\bibitem[S]{s} Sabitov, I. K. {\em Local theory of bendings of surfaces}, Geometry III, Springer, Berlin, Heidelberg, (1992), 179-250.

\bibitem[U]{u} Usmanov, Z. D., {\em On infinitesimal deformations of surfaces of positive curvature with an isolated flat point}, Mathematics of the USSR-Sbornik, \textbf{12(4)} (1970), 595-614.

\bibitem[V]{v} Vekua, I.N., {\em Generalized Analytic Functions}, Pergamon Press, (1962).

\bibitem[YS]{ys} Iakubovich, V. A., \& Starzhinskii, V. M., {\em Linear Differential Equations with Periodic Coefficients}, Wiley, (1975).


\end{thebibliography}
\end{document}